\newlength{\itemlaenge}
\newtheoremstyle{mytheorem}% name
  {}%      Space above, empty = `usual value'
  {}%      Space below
  {\slshape}% Body font
  {}%         Indent amount (empty = no indent, \parindent = para indent)
  {\scshape}% Thm head font
  {.}%        Punctuation after head
  { }%     Space after thm head: " " = normal interword space;
\newtheoremstyle{mydefinition}% name
  {}%      Space above, empty = `usual value'
  {}%      Space below
  {\upshape}% Body font
  {}%         Indent amount (empty = no indent, \parindent = para indent)
  {\scshape}% Thm head font
  {.}%        Punctuation after thm head
  { }%     Space after thm head: " " = normal interword space;
\theoremstyle{mytheorem}
\newtheorem{lemma}{Lemma}[section]
\newtheorem{prop}[lemma]{Proposition}
\newtheorem*{prop*}{Proposition}
\newtheorem{prop_intro}{Proposition}
\newtheorem{cor}[lemma]{Corollary}
\newtheorem{cor_intro}[prop_intro]{Corollary}
\newtheorem{thm}[lemma]{Theorem}
\newtheorem{thm_intro}[prop_intro]{Theorem}
\newtheorem*{thm*}{Theorem}
\theoremstyle{mydefinition}
\newtheorem{rem}[lemma]{Remark}
\newtheorem*{rem*}{Remark}
\newtheorem*{notation*}{Notation}
\newtheorem*{warning*}{Warning}
\newtheorem*{defi*}{Definition}
\numberwithin{equation}{section}
\newcommand{\bqn}{\begin{equation*}}
\newcommand{\eqn}{\end{equation*}}
\newcommand{\bq}{\begin{equation}}
\newcommand{\eq}{\end{equation}}
\newcommand{\ba}{\begin{aligned}}
\newcommand{\ea}{\end{aligned}}
\newcommand{\be}{\begin{enumerate}}
\newcommand{\ee}{\end{enumerate}}
\newcommand{\thismonth}{\ifcase\month % case 0 --- impossible!
  \or January\or February\or March\or April\or May\or June%
  \or July\or August\or September\or October\or November%
  \or December\fi}
\newcommand{\PSL}{\operatorname{PSL}}
\newcommand{\FF}{{\mathbb F}}
\newcommand{\QQ}{{\mathbb Q}}
\newcommand{\RR}{{\mathbb R}}
\newcommand{\Ff}{{\mathcal F}}
\newcommand{\Pp}{{\mathcal P}}
\newcommand{\Ss}{{\mathcal S}}
\newcommand{\Tt}{{\mathcal T}}
\newcommand{\Zz}{{\mathcal Z}}
\def\linfo{\ell_{\mathrm{odd}}^\infty}
\newcommand{\vare}{\epsilon}
\newcommand{\G}{\Gamma}
\newcommand{\bb}{{\partial}}
\def\h{{\rm H}}
\def\hb{{\rm H}_{\rm b}}
\def\cb{{\rm C}_{\rm b}}
\def\cbhar{\ell_{\mu,{\mathrm{alt}}}^\infty}
\def\linfa{\ell_{\mathrm{alt}}^\infty}
\def\ch{{\rm C}}
\def\one{\mathbf{1\kern-1.6mm 1}}
\def\id{{\it I\! d}}
\def\h2{{\operatorname{H_2}}}
\def\h1{{\operatorname{H_1}}}
\def\id{{\operatorname{Id}}}
\def\PSL{\operatorname{PSL}}
\def\to{\rightarrow}
\def\hb{{\rm H}_{\rm b}}
\def\h{{\rm H}}
\renewcommand{\phi}{\varphi}
\def\No{N\raise4pt\hbox{\tiny o}\kern+.2em}
\def\no{n\raise4pt\hbox{\tiny o}\kern+.2em}
\def\bsl{\backslash}
\newcommand{\R}{\textup{Hom}}
\newcommand{\Gg}{\mathcal{G}}
\begin{document}

\title[Isometric embeddings in bounded cohomology]{Isometric embeddings in bounded cohomology}

\author[M. Bucher et al.]{M. Bucher}
\address{Section de Math\'ematiques, Universit\'e de Gen\`eve, 
2-4 rue du Li\`evre, Case postale 64, 1211 Gen\`eve 4, Suisse}
\email{Michelle.Bucher-Karlsson@unige.ch}

\author[]{M. Burger}
\address{Department Mathematik, ETH Z\"urich, 
R\"amistrasse 101, CH-8092 Z\"urich, Switzerland}
\email{burger@math.ethz.ch}

\author[]{R. Frigerio}
\address{Dipartimento di Matematica, Universit\`a di Pisa, Largo B. Pontecorvo 5, 56127 Pisa, Italy}
\email{frigerio@dm.unipi.it}

\author[]{A. Iozzi}
\address{Department Mathematik, ETH Z\"urich, 
R\"amistrasse 101, CH-8092 Z\"urich, Switzerland}
\email{iozzi@math.ethz.ch}

\author[]{C. Pagliantini}
\address{Fakult\"at f\"ur Mathematik
Universit\"at Regensburg
Universit\"atsstrasse 31
93053 Regensburg, Germany}
\email{Cristina.Pagliantini@mathematik.uni-regensburg.de}

\author[]{M. B. Pozzetti}
\address{Department Mathematik, ETH Z\"urich, 
R\"amistrasse 101, CH-8092 Z\"urich, Switzerland}
\email{beatrice.pozzetti@math.ethz.ch}

\thanks{Michelle Bucher was supported by Swiss National Science Foundation 
project PP00P2-128309/1.  Alessandra Iozzi was partial supported by the 
Swiss National Science Foundation project 2000021-127016/2.  
Marc Burger, Alessandra Iozzi and Beatrice Pozzetti were partially supported 
by the Swiss National Science Foundation project 200020-144373. The first five
named authors thank the Institute Mittag-Leffler in Djursholm, Sweden, 
for their warm hospitality during  the preparation of this paper. Likewise, Marc Burger 
and Alessandra Iozzi are grateful to the Institute for Advanced Study in Princeton, NJ 
for their support.}

\keywords{Relative bounded cohomology, isometries in bounded cohomology, simplicial volume, graph of groups, additivity of the simplicial volume, 
Dehn filling, $\ell^1$-homology, Gromov equivalence theorem}

\subjclass[2010]{55N10 and 57N65}

\date{\today}

\begin{abstract} 
This paper is devoted to the construction of norm-preserving maps between bounded cohomology groups. 
For a graph of groups with amenable edge groups 
we construct an isometric embedding of the direct sum of the bounded cohomology of the vertex groups 
in the bounded cohomology of the fundamental group of the graph of groups. 
With a similar technique we prove that if $(X,Y)$ is a pair of CW-complexes 
and the fundamental group of each connected component of $Y$ is amenable,
the isomorphism between the relative bounded cohomology of $(X,Y)$ and 
the bounded cohomology of $X$ in degree at least $2$ is isometric.
%For a pair of CW-complexes $(X,Y)$, if the fundamental group of each connected component of $Y$ is amenable, 
%then the inclusion induces an isomorphism between the relative bounded cohomology of $(X,Y)$ and 
%the bounded cohomology of $X$ in degree at least $2$. We show that this isomorphism is isometric. 
As an application we provide easy and self-contained proofs of Gromov's Equivalence Theorem and 
of the additivity of the simplicial volume with respect to gluings along $\pi_1$-injective boundary components with amenable fundamental group.

%Our proof avoids the use of the (still rather technical) theory of multicomplexes.
\end{abstract}
\maketitle
%
%
%\tableofcontents
%
%
%%%%%%%%%%%%%%%%%%%%%%%%%%%%%%%%% Aufbau, Dateien %%%%%%%%%%%%%%%%%%%%%%%%%%%
\section{Introduction}\label{sec:intro}
Bounded cohomology of groups and spaces was introduced by Gromov in the mid seventies \cite{Gromov_82}
and can be dramatically different from their usual cohomology.
For example, in the context of bounded cohomology,
the lack of a suitable Mayer--Vietoris sequence 
prevents the use of the usual ``cut and paste'' techniques 
exploited in the computation of singular cohomology.
Another peculiarity of
bounded cohomology  is that, in positive degree, the bounded cohomology of any amenable group 
(or of any space with amenable fundamental group) vanishes.

Using the Mayer--Vietoris sequence it is easy to show that, in positive degree, 
the cohomology of a free product of groups is isomorphic to the direct sum of the cohomologies of the factors.
The main result of this paper provides an analogous result in the context of bounded cohomology.  
Since amenable groups are somewhat invisible to bounded cohomology, it is natural to extend the object of our study from
free products to amalgamated products (or HNN extensions) along amenable subgroups. 
In order to treat both these cases at the same time we will exploit notions and 
results coming from the Bass--Serre theory of graphs of groups (we refer the reader to 
\S~\ref{sec:proof:graph} for a brief account on this topic).

\medskip
%\subsection*{Bounded cohomology of graphs of groups}
For every group $\Gamma$ we denote by $\hb^\bullet (\Gamma)$ the bounded cohomology of $\Gamma$ 
with trivial real coefficients, endowed with the $\ell^\infty$-seminorm.
If $\mathcal{G}$ is a graph of groups based on the graph $G$,
%(see \S~\ref{graph:sec} for the definitions),
we denote by $V(G)$ the set of vertices of $G$, and by $\Gamma_v$, $v\in V(G)$,
the vertex groups of $\mathcal{G}$. Moreover, if $G$ is finite, then for every element
$(\varphi_{1},\ldots,\varphi_{k})\in \oplus_{v \in V(G)} \hb^n (\Gamma_v)$ we set
$$
\| (\varphi_{1},\ldots,\varphi_{k})\|_\infty=\max \{\|\varphi_{1}\|_\infty,\ldots,\|\varphi_{k}\|_\infty\}\ .
$$
We denote by $\Gamma$ the fundamental group of $\mathcal{G}$, by $i_v\colon \Gamma_v\hookrightarrow \Gamma$  the inclusion of $\Gamma_v$ into $\Gamma$, and
by $\h(i_v)\colon \hb^n(\Gamma)\to \hb^n(\Gamma_v)$ the map induced by $i_v$ on bounded cohomology.

The main result of our paper is the following:

\begin{thm_intro}\label{thm:bounded:graph}
Let $\Gamma$ be the fundamental group of a graph of groups $\mathcal{G}$ based on the 
finite graph $G$.
Suppose that every vertex group of $\mathcal{G}$ is countable, and that every edge group
of $\mathcal{G}$ is amenable.
Then for every $n\in\mathbb{N}\setminus\{0\}$ there exists an isometric embedding 
$$
\Theta\colon
\bigoplus_{v\in V(G)} \hb^n (\Gamma_v) \longrightarrow \hb^n(\Gamma)
$$
which provides a right inverse to the map
$$
 \bigoplus_{v\in V(G)} \h(i_v) \ \colon \ \hb^n(\Gamma)\longrightarrow  \bigoplus_{v\in V(G)} \hb^n (\Gamma_v) \ .
$$
\end{thm_intro}

The isometric embedding $\Theta$ is in general far from being an isomorphism: for example,
the real vector spaces $\hb^2(\mathbb{Z}*\mathbb{Z})$ and $\hb^3(\mathbb{Z}*\mathbb{Z})$ 
are infinite-dimensional~\cite{Brooks, Mitsumatsu} (see also \cite{Rolli} for a very beautiful and slick proof), 
while $\hb^n(\mathbb{Z})\oplus \hb^n(\mathbb{Z})=0$ for every $n\geq 1$, since $\mathbb{Z}$ is amenable. 

Moreover the hypothesis that edge groups are amenable is necessary, as the following example shows.
Let $\Gamma<\PSL(2,\QQ_p)\times\PSL(2,\QQ_q)$ be an irreducible torsionfree cocompact lattice.
Then $\Gamma$ acts faithfully and without inversion on the Bruhat--Tits tree $\Tt_{p+1}$
associated to $\PSL(2,\QQ_p)$ with an edge as fundamental domain and 
is therefore the amalgamated product $\FF_a\ast_{\FF_c}\FF_b$ of two non-Abelian free groups
over a common finite index subgroup.  It follows from \cite[Theorem~1.1]{BM1}
that $\hb^2(\Gamma)$ is finite dimensional, while $\hb^2(\FF_a)$ is infinite dimensional.

Our construction of the map $\Theta$ in Theorem~\ref{thm:bounded:graph} relies on 
the analysis of the action of $\Gamma$ on its Bass--Serre tree, which allows us  
to define a projection from combinatorial simplices in $\Gamma$ to simplices with values in the vertex groups. 
Our construction is inspired by \cite[p.~54]{Gromov_82}
and exploits the approach to bounded cohomology developed by
Ivanov~\cite{Ivanov}, Burger and Monod~\cite{BM2, Monod_book}.

Surprisingly enough, the proof of Theorem~\ref{thm:bounded:graph} runs into additional difficulties in the case of degree 2. 
In that case, even to define the map $\Theta$, 
it is necessary to use the fact that bounded cohomology can be computed via the complex of pluriharmonic functions~\cite{BM1}, 
and that such a realization has no coboundaries in degree 2 
due to the double ergodicity of the action of a group on an appropriate Poisson boundary \cite{Ka, BM2}.

\medskip
A simple example of a situation to which Theorem~\ref{thm:bounded:graph} applies is the one 
in which $G$ consists only of one edge $e$ with vertices $v$ and $w$.  In this case,
we can realize $\Gamma_v\ast_{\Gamma_e}\Gamma_w$ as the fundamental group of a space $X$ 
that can be decomposed as $X=X_v\cup X_w$, where $X_v\cap X_w$ has amenable fundamental group.
A fundamental result by Gromov implies that the bounded cohomology of a CW-complex\footnote{See \cite{B} for a more general version for all path connected spaces.} 
is isometrically isomorphic to the bounded cohomology of its fundamental group \cite[p. 49]{Gromov_82}.
Using this, Theorem~\ref{thm:bounded:graph} specializes to the statement that there is an isometric embedding
\bqn
\hb^n(X_v)\oplus\hb^n(X_w)\hookrightarrow\hb^n(X)
\eqn
that is a left inverse to the restriction map.  This forces classes in the image of the map
to have some compatibility condition on $X_v\cap X_w$ and leads naturally to considering 
the bounded cohomology of $X_v$ and $X_w$ relative to $X_v\cap X_w$.
 
To this purpose, let $(X,Y)$ be a pair of countable CW-complexes, 
and denote by $j^n\colon \cb^n(X,Y)\to \cb^n(X)$ the inclusion of relative 
bounded cochains into bounded cochains.

\begin{thm_intro}\label{thm:main}  Let $X\supseteq Y$ be a pair of countable CW-complexes.  
Assume that each connected component of $Y$ has amenable fundamental group.
Then the map
\bqn
\xymatrix@1{
\h ( j^n)\colon \hb^n(X,Y)\ar[r]
&\hb^n(X)
}
\eqn
is an isometric isomorphism for every $n\geq2$.
\end{thm_intro}

The amenability of $\pi_1(Y)$ insures immediately, using the long exact sequence in relative bounded cohomology, 
the isomorphism of $\hb^n(X,Y)$ and $\hb^n(X)$, but the fact that this isomorphism is isometric is, 
to our knowledge, not contained in Gromov's paper and requires a proof.
This result  was obtained independently by Kim and Kuessner~\cite{KK},
using the rather technical theory of multicomplexes.
Our proof of Theorem~\ref{thm:main} uses instead in a crucial way the construction of an amenable 
$\pi_1(X)$-space thought of as a discrete approximation of the pair $(\widetilde X, p^{-1}(Y))$, 
where $p\colon \widetilde{X}\to X$ is a universal covering. 
The same technique is at the basis of the proof of Theorem~\ref{thm:bounded:graph}.

\subsection*{Applications}
In the second part of the paper we show how Theorems~\ref{thm:bounded:graph} and \ref{thm:main} can be used
to provide simple, self-contained proofs of two theorems in bounded cohomology due to Gromov and some new consequences.
The proofs of Gromov's results available in the literature rely on the  theory of multicomplexes \cite{Gromov_82, Kuessner}.

The first of our applications is Gromov's additivity theorem for the simplicial volume,
from which we deduce the behavior of the simplicial volume under generalized Dehn fillings,
thus generalizing a result of Fujiwara and Manning.
We then establish Gromov's equivalence theorem and 
give an $\ell^1$-homology version of Theorem~\ref{thm:main} due to Thurston.

\subsubsection*{Additivity of the Simplicial Volume}\label{subsubsec:simplvol}
The simplicial volume is a homotopy invariant of manifolds introduced by Gromov in his seminal paper~\cite{Gromov_82}. 
If $M$ is a connected, compact and oriented manifold with
(possibly empty) boundary, then the simplicial volume of $M$ is equal to the $\ell^1$-seminorm
of the fundamental class of $M$ (see \S~\ref{sec:additivity} for the precise definition).
It is usually denoted by $\|M\|$ if $M$ is closed, and by $\|M,\bb M\|$
if $\bb M\neq\emptyset$. The simplicial volume may be defined also in the context of open manifolds~\cite{Gromov_82},
but in this paper we will restrict our attention  to compact ones.
More precisely, unless otherwise stated, every manifold will be assumed to be connected, compact and oriented.

The explicit computation of non-vanishing simplicial volume is only known 
for complete finite-volume hyperbolic manifolds (see \cite{Gromov_82, Thurston_notes} for the closed case 
and e.g.~\cite{stefano, FriPag, Fujiwara_Manning, Bucher_Burger_Iozzi_mostow} for the cusped case) 
and  for manifolds locally isometric to the product of two hyperbolic planes \cite{Bucher3} 
(see also \cite{LoehSauer,BKK} for the non-compact case with amenable cusp groups). 
Gromov's Additivity Theorem can be used to establish more computations of the simplicial volume 
by taking connected sums or gluings along $\pi_1$-injective boundary components 
with amenable fundamental group. For example the simplicial volume of a closed $3$-manifold $M$  equals the sum of the simplicial volumes of its hyperbolic pieces~\cite{Soma}.

Furthermore, without aiming at being exhaustive, here we just mention that 
Gromov Additivity Theorem has been also exploited in studying
the possible degrees of maps between manifolds~\cite{Rong, Wang, Derbez2,  Boi,  Derbez3,  Derbez1}, 
in establishing results about the behavior of manifolds under collapse~\cite{Orbi, Bess}, 
and in various other areas of low-dimensional topology~\cite{Adams, Murakami, Boi2, Bel, JNWZ, Storm, Bruno, Koj,  Agol}.

\begin{thm_intro}[Gromov Additivity Theorem]\label{simpl:thm}
Let $M_1,\ldots,M_k$ be $n$-dimensional manifolds, $n\geq 2$, 
suppose that the fundamental group of every boundary component of every $M_j$ is amenable, and let $M$
be the manifold obtained by gluing $M_1,\ldots,M_k$ along (some of) their boundary components.
Then
$$
\| M,\bb M\|\leq \|M_1,\bb M_1\|+\ldots+\|M_k,\bb M_k\|\ .
$$
In addition, if the gluings defining $M$ are compatible, then
$$
\| M,\bb M\|= \|M_1,\bb M_1\|+\ldots+\|M_k,\bb M_k\|\ .
$$
\end{thm_intro}

Here a gluing $f:S_1\to S_2$ of two boundary components $S_i\subseteq \partial M_{j_i}$ is called \emph{compatible} 
if $f_*(K_1)=K_2$ where $K_i$ is the kernel of the map $\pi_1(S_i)\to\pi_1(M_{j_i})$ induced by the inclusion.
\bigskip

An immediate consequence of this theorem is the fact that the simplicial volume is additive with respect to connected sums: given two $n$-dimensional manifolds $M_1$, $M_2$, if $n\geq 3$ and the fundamental group of every boundary component of $M_i$ is amenable, then 
$$
\| M_1\# M_2, \partial (M_1\# M_2)\|=\|M_1,\partial M_1\|+\|M_2,\partial M_2\|,
$$
where $M_1\# M_2$ is constructed by removing an open ball from the interior of $M_i$ and gluing the obtained manifolds along the boundary spheres.

\subsubsection*{Generalized Dehn Fillings}
A consequence of the first part of Theorem~\ref{simpl:thm}
is an easy proof of a result of Fujiwara and Manning~\cite{Fujiwara_Manning} about generalized Dehn fillings. 
Let $n\geq 3$ and let $M$ be a compact orientable
$n$-manifold such that 
$\partial M=N_1\cup\ldots\cup N_m$, where $N_i$ is an $(n-1)$-torus for every $i$.
For each $i\in\{1,\dots, m\}$ we put on $N_i$ a flat structure,
and we choose a totally geodesic $k_i$-dimensional
torus  $T_i\subseteq N_i$, where $1\leq k_i\leq n-2$.
Each $N_i$ is foliated by parallel copies of $T_i$ with leaf space
$L_i$ homeomorphic to a $(n-1-k_i)$-dimensional torus. 
The \emph{generalized Dehn filling} $R=M(T_1,\dots,T_m)$ is defined 
as the quotient of ${M}$ obtained  by collapsing $N_i$ on $L_i$ for every $i\in\{1,\dots,m\}$. 
Observe that unless $k_i=1$ for every $i$, the quotient $R$ is not a manifold. In any case, 
being a pseudomanifold, $R$ always admits a fundamental class,
whence a well-defined simplicial volume. Fujiwara and Manning proved that, if 
the interior of $M$ admits a complete finite-volume hyperbolic structure, then
the inequality $\|R\|\leq \|M,\partial M\|$ holds. 
Their argument
easily extends to the case in which the
fundamental group of $M$ is residually finite and the inclusion of each boundary
torus in $M$ induces an injective map on fundamental groups. As a consequence
of Theorem~\ref{simpl:thm} we obtain the following generalization of Fujiwara and Manning's
result:

\begin{cor_intro}\label{Dehn filling}
Let $M$ be a compact orientable
$n$-manifold with boundary given by a union of tori, and let $R$ 
be a generalized Dehn filling of $M$.
Then
$$\|R \|\leq \|M,\partial M\|.$$
\end{cor_intro}

\subsubsection*{Equivalence of Gromov Norms}
In \cite{Gromov_82} Gromov introduced a one-parameter family of seminorms on $\h_n(X,Y)$.
More precisely, let $\theta\in [0,\infty)$ and consider the norm $\|\cdot\|_1(\theta)$ on $C_n(X)$ 
defined by $\|c\|_1(\theta)=\|c\|_1+\theta\|\partial_n c\|_1$. 
Every such norm is equivalent to the usual norm $\|\cdot\|_1=\|\cdot\|_1(0)$ for 
every $\theta\in[0,\infty)$ and induces a quotient seminorm on relative homology, 
still denoted by $\|\cdot \|_1(\theta)$.
By passing to the limit, one can also define a seminorm $\|\cdot\|_1(\infty)$ that, 
however, may be non-equivalent to $\|\cdot\|_1$.
The following result is stated by Gromov in~\cite{Gromov_82}.

\begin{thm_intro}[{Equivalence Theorem, \cite[p.~57]{Gromov_82}}]\label{thm:equi}
Let $X\supseteq Y$ be a pair of countable CW-complexes, and let $n\geq 2$.
If the fundamental groups of all connected components of $Y$ are amenable,
then the seminorms $\|\cdot\|_1(\theta)$ on $\h_n(X,Y)$
coincide for every $\theta\in[0,\infty]$.
\end{thm_intro} 

In order to prove Theorem~\ref{thm:equi}, we establish two isometric isomorphisms of independent interest 
(see Lemma \ref{lem: theta iso} and Proposition  \ref{fund:prop}), 
using the homological construction of a mapping cone complex and 
considering a one-parameter family of seminorms in bounded cohomology introduced by Park~\cite{Park}.

As noticed by Gromov, Theorem~\ref{thm:equi} admits the following equivalent formulation, 
which is inspired by Thurston~\cite[\S 6.5]{Thurston_notes} and 
plays an important role in several results about the (relative) simplicial volumes of gluings and fillings:

\begin{cor_intro}\label{Thurston's version}
Let $X\supseteq Y$ be a pair of countable CW-complexes, and
suppose that the fundamental groups of all the components of $Y$ are amenable.
Let $\alpha\in \h_n(X,Y)$, $n\geq 2$. Then,
for every $\epsilon>0$, there exists an element $c\in \ch_n(X)$ with $\partial_n c\in \ch_{n-1}(Y)$ 
such that $[c]=\alpha\in \h_n(X,Y)$, 
$\|c\|_1<\|\alpha\|_1+\epsilon$ and $\|\partial_n c\|_1<\epsilon$.
\end{cor_intro}
\begin{proof}
 Let $\theta=(\|\alpha\|_1+\epsilon)/\epsilon$. By Theorem~\ref{thm:equi} 
 we know that $\|\cdot\|_1(\theta)$ induces the norm $\|\cdot\|_1$ in homology, 
 so we can find a representative $c \in  \ch_n(X)$ of $\alpha$ 
 with $\|c\|_1(\theta)=\|c\|_1+\theta\|\partial_n c\|_1\leq \|\alpha\|_1+\epsilon$.
 This implies that $\|c\|_1\leq\|\alpha\|_1+\epsilon$ and $\|\partial_n c\|_1\leq (\|\alpha\|_1+\epsilon)/\theta = \epsilon$.
\end{proof}

\section{Resolutions in bounded cohomology}\label{sec: Resolutions}

Let $X$ be a space, where here and in the sequel by a space we will always mean a countable CW-complex. 
We denote by $\cb^\bullet(X)$ the complex of bounded real valued
singular cochains on $X$ and, if $Y\subset X$ is a subspace, 
by $\cb^\bullet(X,Y)$ the subcomplex of those bounded cochains
that vanish on simplices with image contained in $Y$.  All these spaces
are endowed with the $\ell^\infty$-norm and the corresponding cohomology groups
are equipped with the corresponding quotient seminorm.

For our purposes, it is important to  observe that the universal covering map
$p:\widetilde X\to X$ induces an isometric identification of the complex 
$\cb^\bullet(X)$ with the complex $\cb^\bullet(\widetilde X)^\Gamma$
of $\Gamma:=\pi_1(X)$-invariant bounded cochains on $\widetilde X$.
Similarly, if $Y':=p^{-1}(Y)$, we obtain an isometric identification
of the complex $\cb^\bullet(X,Y)$ with the complex 
$\cb^\bullet(\widetilde X,Y')^\Gamma$ of $\Gamma$-invariants
of $\cb^\bullet(\widetilde X,Y')$.  

The main ingredient in the proof of Theorem~\ref{thm:main}
is the result of Ivanov \cite{Ivanov} that the complex of  $\Gamma$-invariants of \bqn
\xymatrix{
0\ar[r]
&\cb^0(\widetilde X)\ar[r]
&\cb^1(\widetilde X)\ar[r]
&\dots
}
\eqn
computes the bounded cohomology of $\Gamma$. In fact, we will use the more precise statement 
that the obvious augmentation of the complex above is a strong resolution of $\RR$ by relatively injective Banach $\Gamma$-modules 
(see \cite{Ivanov} for the definitions of strong resolution and relatively injective module). 

By standard homological algebra techniques  \cite{Ivanov}, 
it follows from the fact that $\cb^n(\widetilde X)$ is a strong resolution by $\Gamma$-modules and 
$\ell^\infty(\Gamma^{\bullet+1})$ is a cochain complex  
of relatively injective $\Gamma$-modules that there exists   a $\Gamma$-morphism of complexes
\bq \tag{$\Diamond
$}\label{eq: g_n}
\xymatrix@1{
g^n:\cb^n(\widetilde X)\ar[r]
&\ell^\infty(\Gamma^{n+1})
}
\eq
extending the identity of $\R$, and such that $g^n$ is norm non-increasing, i.e. $\|g^n\|\leq1$, for $n\geq0$. 
This map induces the isometric isomorphism $\hb^\bullet(X)\rightarrow \hb^\bullet(\Gamma)$, and will be referred
to as Ivanov's map.

\bigskip

The second result we need lies at the basis of the fact that the bounded cohomology of $\Gamma$ 
can be computed isometrically from the complex of bounded functions on any amenable $\Gamma$-space. The notion of 
amenable space 
was introduced by Zimmer~\cite{Zimmer} in the context of
actions of topological groups
on standard measure spaces
(see e.g.~\cite[\S 5.3]{Monod_book} for several equivalent definitions).
%We refer the reader to~\cite[\S 5.3]{Monod_book} for several equivalent definitions of amenability for the action of a topological group on a standard  $\G$-spaces in a much more general context.
In our case of interest, i.e.~when $\G$ is a discrete countable group acting on a countable set $S$ 
(which may be thought as endowed with the discrete topology), the amenability of $S$ as a $\G$-space amounts to the amenability
of the stabilizers in $\G$ of elements of $S$~\cite[Theorem 5.1]{AEG}.

\begin{prop}\label{prop:monod}
Let $S$ be an amenable $\G$-set, where $\Gamma$ is a discrete countable group.
%Let $\Gamma$ be a countable group acting on a countable set $S$ with amenable stabilizers.
Then:
\begin{enumerate}
 \item 
There exists a $\Gamma$-morphism of complexes
\bqn
\xymatrix{
\mu^\bullet\colon\ell^\infty(\Gamma^{\bullet +1})\ar[r]
&\ell_{\mathrm{alt}}^\infty(S^{\bullet +1})\, }
\eqn
extending $\id_\RR:\RR\to \RR$ that is norm non-increasing in every degree.
\item
The cohomology of the complex
$$
\xymatrix{
0\ar[r] & \ell^\infty_{\mathrm{alt}}(S)^\G\ar[r]  &\ell^\infty_{\mathrm{alt}}(S^2)^\G\ar[r]&\ell^\infty_{\mathrm{alt}}(S^3)^\G\ar[r]&\ldots
}
$$ 
is canonically isometrically isomorphic to $\hb^\bullet(\G)$.
\end{enumerate}
\end{prop}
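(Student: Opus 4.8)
The plan is to deduce Proposition~\ref{prop:monod} from the general homological machinery of Ivanov and Burger--Monod, using only the amenability of $S$ as a $\Gamma$-space. The two statements are closely linked: part (2) is essentially the existence of a pair of norm non-increasing $\Gamma$-morphisms between the standard resolution $\ell^\infty(\Gamma^{\bullet+1})$ and the complex $\ell^\infty_{\mathrm{alt}}(S^{\bullet+1})$, both augmented over $\RR$, together with the fact that both compute $\hb^\bullet(\Gamma)$ isometrically.

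First I would recall the two facts that make the argument work. On one hand, $\ell^\infty(\Gamma^{\bullet+1})$, with its obvious augmentation $\RR\to\ell^\infty(\Gamma)$, is a strong resolution of $\RR$ by relatively injective Banach $\Gamma$-modules (this is classical; it is the resolution used implicitly already in the discussion of Ivanov's map~\eqref{eq: g_n}). On the other hand, when $S$ is an amenable $\Gamma$-set, the modules $\ell^\infty(S^{n+1})$ — and hence their alternating subcomplexes $\ell^\infty_{\mathrm{alt}}(S^{n+1})$, which are $\Gamma$-invariant direct summands — are relatively injective Banach $\Gamma$-modules; moreover the augmented complex $0\to\RR\to\ell^\infty_{\mathrm{alt}}(S)\to\ell^\infty_{\mathrm{alt}}(S^2)\to\cdots$ is a strong resolution of $\RR$ (the contracting homotopy is built, as usual, by inserting a fixed basepoint $s_0\in S$ in the first coordinate, which has norm one and is only $\Gamma$-equivariant in the homotopy-up-to-the-right sense needed for strongness). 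This is exactly the content of~\cite[\S 5.3, \S 7]{Monod_book} / Burger--Monod, invoking the characterization of amenability of the $\Gamma$-set $S$ via amenability of point stabilizers~\cite[Theorem~5.1]{AEG} quoted just above the statement.

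Granting these two facts, part (1) is the fundamental lemma of homological algebra in this category: since $\ell^\infty(\Gamma^{\bullet+1})$ is a strong resolution of $\RR$ and $\ell^\infty_{\mathrm{alt}}(S^{\bullet+1})$ is a complex of relatively injective $\Gamma$-modules, any $\Gamma$-morphism $\RR\to\RR$ — in particular $\id_\RR$ — extends to a $\Gamma$-morphism of complexes $\mu^\bullet\colon\ell^\infty(\Gamma^{\bullet+1})\to\ell^\infty_{\mathrm{alt}}(S^{\bullet+1})$, and the standard construction (degree-by-degree, using the relative injectivity diagrams together with the contracting homotopy of the source resolution) can be carried out so that each $\mu^n$ is norm non-increasing; this is precisely the same mechanism that produced the norm non-increasing map $g^n$ in~\eqref{eq: g_n}, applied in the opposite direction. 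For part (2), by the same fundamental lemma applied with the roles reversed one obtains a norm non-increasing $\Gamma$-morphism $\nu^\bullet$ in the other direction extending $\id_\RR$; the induced maps on cohomology of $\Gamma$-invariants are then mutually inverse isomorphisms (any two extensions of $\id_\RR$ are $\Gamma$-homotopic, so $\mu\circ\nu$ and $\nu\circ\mu$ induce the identity), and since both compositions are norm non-increasing, the isomorphism $H^\bullet(\ell^\infty_{\mathrm{alt}}(S^{\bullet+1})^\Gamma)\cong\hb^\bullet(\Gamma)$ is isometric. Canonicity follows because the isomorphism is independent of the chosen extensions, again by $\Gamma$-homotopy uniqueness.

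The main obstacle, and the only point requiring genuine care, is establishing relative injectivity of $\ell^\infty(S^{n+1})$ (equivalently $\ell^\infty_{\mathrm{alt}}(S^{n+1})$) as a Banach $\Gamma$-module from the amenability of $S$, and verifying that the contracting homotopy of the augmented $S$-complex is bounded by $1$ in each degree — i.e.\ that all the maps in sight can be chosen norm non-increasing rather than merely bounded. Both are standard in the Burger--Monod framework but depend on the amenability hypothesis in an essential way: relative injectivity of $\ell^\infty(S^{\bullet+1})$ uses an invariant conditional expectation (a $\Gamma$-equivariant norm-one projection onto the constants along the $S$-coordinates), which exists exactly because the stabilizers are amenable. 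Once this is in place, everything else is the formal transfer of resolutions, identical in spirit to the construction of Ivanov's map recalled before the statement, so I would present it by citing~\cite{Ivanov, Monod_book, AEG} for the relative injectivity and strongness and then spelling out the degree-by-degree extension only to the extent needed to see the norm bound $\|\mu^n\|\le 1$.
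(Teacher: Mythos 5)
Your proposal is correct and follows essentially the same route as the paper: the paper's proof consists of citing \cite[Lemma 7.5.6]{Monod_book} for part (1) (applied with $T=\G$) and \cite[Theorem 7.5.3]{Monod_book} for part (2), and the argument you outline—relative injectivity of $\ell^\infty_{\mathrm{alt}}(S^{\bullet+1})$ from amenability of stabilizers via \cite{AEG}, strongness of both augmented resolutions, and the fundamental lemma producing norm non-increasing extensions of $\id_\RR$ in both directions—is exactly the Burger--Monod machinery behind those cited results. The only loose phrasing is your description of the basepoint-insertion homotopy on the alternating subcomplex (strongness requires no equivariance, and one must alternate the homotopy), but this is handled in the reference you would cite and does not affect the argument.
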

\begin{proof}
 Point (1) is proved in~\cite[Lemma 7.5.6]{Monod_book} (applied to the case $T=\G$), point (2) in~\cite[Theorem 7.5.3]{Monod_book}.
\end{proof}

We point out that the computation of bounded cohomology via alternating cochains
on amenable spaces is natural in the following sense:

\begin{lemma}\label{lemma:funct}
 Let $i\colon \G_1 \to \G$ be an inclusion of countable groups, let $S_1$ be a discrete amenable $\G_1$-space, 
 and $S$ a discrete amenable $\G$-space. If $\phi\colon S_1\to S$ is equivariant with respect to $i$, then the following diagram commutes:
 \bqn\xymatrix{\Zz\linfa(S^{\bullet+1})^\G\ar[r]\ar[d]^{\phi^*}&\hb^\bullet(\G)\ar[d]^{i^*}\\ \Zz\linfa(S_1^{\bullet+1})^{\G_1}\ar[r]&\hb^\bullet(\Gamma_1).}\eqn
\end{lemma}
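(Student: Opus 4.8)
The plan is to unwind the definitions on both sides of the square and track a cohomology class through the two realizations of bounded cohomology provided by Proposition~\ref{prop:monod}. Recall that the horizontal arrows are the canonical isometric isomorphisms of Proposition~\ref{prop:monod}(2): a cocycle in $\linfa(S^{\bullet+1})^\G$ maps to its class in $\hb^\bullet(\G)$ via (the cohomology of) a $\G$-morphism of resolutions $\ell^\infty(\G^{\bullet+1})\to\linfa(S^{\bullet+1})$ together with a chain homotopy inverse, and similarly on the $\G_1$ side. The vertical arrow $\phi^*$ is induced by the $i$-equivariant map $\phi\colon S_1\to S$, which gives $\linfa(S^{n+1})\to\linfa(S_1^{n+1})$, $f\mapsto f\circ\phi^{\times(n+1)}$; since $\phi$ is $i$-equivariant this carries $\G$-invariants to $\G_1$-invariants. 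The right-hand vertical arrow $i^*$ is the standard restriction map in bounded cohomology.

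The core of the argument is the standard uniqueness statement of homological algebra in this setting (see \cite{Ivanov, Monod_book}): any two $\G$-morphisms between a strong resolution and a complex of relatively injective modules extending the same map on coefficients are $\G$-homotopic, hence induce the same map on cohomology of the invariants. First I would fix a $\G$-morphism $\mu^\bullet\colon \ell^\infty(\G^{\bullet+1})\to\linfa(S^{\bullet+1})$ as in Proposition~\ref{prop:monod}(1), and likewise a $\G_1$-morphism $\mu_1^\bullet\colon \ell^\infty(\G_1^{\bullet+1})\to\linfa(S_1^{\bullet+1})$. Then I would consider the two composites
\bqn
\xymatrix@1{
\ell^\infty(\G^{\bullet+1})\ar[r]^{\mu^\bullet} & \linfa(S^{\bullet+1})\ar[r]^{\phi^*} & \linfa(S_1^{\bullet+1})
}
\eqn
and
\bqn
\xymatrix@1{
\ell^\infty(\G^{\bullet+1})\ar[r]^{r^\bullet} & \ell^\infty(\G_1^{\bullet+1})\ar[r]^{\mu_1^\bullet} & \linfa(S_1^{\bullet+1})\,,
}
\eqn
where $r^\bullet$ is the restriction $\ell^\infty(\G^{\bullet+1})\to\ell^\infty(\G_1^{\bullet+1})$ along $i$. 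Both are $\G_1$-morphisms of complexes from $\ell^\infty(\G^{\bullet+1})$ (viewed as a $\G_1$-complex via $i$, which is still a strong resolution of $\RR$ by relatively injective $\G_1$-modules) into $\linfa(S_1^{\bullet+1})$, and both extend $\id_\RR$. Hence they are $\G_1$-equivariantly homotopic, so they induce the same map $\hb^\bullet(\G)\to\hb^\bullet(\G_1)$ on cohomology of the $\G_1$-invariants. Reading off the square: the bottom-then-right composite in the Lemma is exactly the map induced by the first composite, while the left-then-bottom composite is the map induced by the second (since $i^*$ on $\hb^\bullet$ is realized by $r^\bullet$ and the horizontal isomorphisms are realized by $\mu^\bullet$, $\mu_1^\bullet$). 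This gives commutativity.

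The one point requiring a little care — and the main obstacle, though a mild one — is the bookkeeping needed to see that the horizontal isomorphisms in the square are genuinely implemented by $\mu^\bullet$, $\mu_1^\bullet$ in a way compatible with the chosen homotopy inverses, so that "the same map on cohomology" translates into the displayed square rather than a square commuting only up to the ambiguity of different chain-level models. I would handle this by noting that $\hb^\bullet(\G)$ itself is, by definition, the cohomology of $\ell^\infty(\G^{\bullet+1})^\G$, so $i^*\colon\hb^\bullet(\G)\to\hb^\bullet(\G_1)$ is by definition induced by $r^\bullet$, and the isomorphism of Proposition~\ref{prop:monod}(2) is by construction the one induced by $\mu^\bullet$ (its inverse being induced by any homotopy inverse, whose existence is again the relative-injectivity/strong-resolution formalism). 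Thus every arrow in the square is realized at the cochain level by one of $\mu^\bullet$, $\mu_1^\bullet$, $r^\bullet$, $\phi^*$, and the commutativity of the square at the level of cohomology follows from the $\G_1$-homotopy between the two cochain composites above. No estimates are needed since we only assert commutativity, not isometry.
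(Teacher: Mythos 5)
Your argument is correct and is exactly the standard one the paper has in mind: the paper states Lemma~\ref{lemma:funct} without proof, implicitly appealing to the same homological-algebra fact you use, namely that the two $\G_1$-morphisms $\phi^*\circ\mu^\bullet$ and $\mu_1^\bullet\circ r^\bullet$ from the strong resolution $\ell^\infty(\G^{\bullet+1})$ into the complex of relatively injective $\G_1$-modules $\linfa(S_1^{\bullet+1})$ both extend $\id_\RR$ and are therefore $\G_1$-homotopic, so they agree on cohomology of invariants. The only cosmetic remark is that the relative injectivity needed in the uniqueness step is that of the target $\linfa(S_1^{\bullet+1})$ (guaranteed by amenability of $S_1$ as a $\G_1$-space), not of the source, but this does not affect the validity of your proof.
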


\bigskip

The third and last ingredient we need is a result from \cite{BM1} 
where it is shown that the bounded cohomology of $\Gamma$ is realized by yet another complex, 
namely the resolution via $\mu$-pluriharmonic functions.

Let $\mu$ be a symmetric probability measure on $\Gamma$ and 
denote by $\ell_{\mu, \mathrm{alt}}^\infty(\Gamma^{n+1})$ the subcomplex of $\ell_{\mathrm{alt}}^\infty(\Gamma^{n+1})$ 
consisting of $\mu$-pluriharmonic functions on $\Gamma^{n+1}$, i.e.~of elements $f\in\ell^\infty_\mathrm{alt} (\G^{n+1})$
such that 
$$
f(g_0,\ldots,g_n)=\int_{\G^{n+1}}f(g_0\gamma_0^{-1},\ldots,g_n\gamma_n^{-1})d\mu(\gamma_0)\ldots d\mu(\gamma_n)
$$
for every $(g_0,\ldots,g_n)\in\G^{n+1}$.
By~\cite[Lemma 3.13]{BM1}, the inclusion $\cbhar(\Gamma^\bullet)\hookrightarrow \ell_{\mathrm{alt}}^\infty(\Gamma^\bullet)$ induces isometric
isomorphisms in cohomology. 

Moreover, if $(B,\nu)$ is the Poisson boundary of $(\Gamma,\mu)$, it is proven in \cite[Proposition 3.11]{BM1} that the Poisson transform
\begin{displaymath}
 \begin{array}{c}
 \Pp:L^\infty_{\mathrm{alt}}(B^{n+1},\nu^{\otimes n+1})\to\cbhar(\Gamma^{n+1})\\
 \Pp(f)(g_0,\ldots,g_n)=\int_{B^{n+1}}f(g_0\xi_0,\ldots,g_n\xi_n)d\nu(\xi_0)\ldots d\nu(\xi_n)
 \end{array}
\end{displaymath}
is a $\Gamma$-equivariant isometric isomorphism.

The main theorem of~\cite{Ka} (see also~\cite[Theorem 0.2]{BM2} and \cite[Proposition 4.2]{BI} for the case of finitely generated groups)
implies that, if the support of $\mu$ generates $\Gamma$, then the action of $\G$ on $B$ is doubly ergodic, in particular
$$\cbhar(\G^2)^\G=L^\infty_{\mathrm{alt}} (B^2,\nu^{\otimes 2})^\G=0\ ,$$
and the 
projection of ${\mathcal Z}(\ell^\infty_{\mathrm{alt}}(\G^3)^\G)$ onto $\hb^2(\G)$ restricts to an isometric isomorphism
between the space  of $\G$-invariant
$\mu$-pluriharmonic  alternating cocycles ${\mathcal Z}\cbhar(\G^3)^\G$ and the second bounded cohomology module of $\G$. This implies

\begin{prop}\label{prop:har}
 Let $\G$ be a countable group and $\mu$ a symmetric probability measure whose support generates $\G$. Then there is an isometric linear section
 $$
 \sigma\colon \hb^2(\G)\to\Zz\ell^\infty(\G^3)^\G.
 $$
 of the projection defining bounded cohomology.
\end{prop}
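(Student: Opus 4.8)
The plan is to assemble $\sigma$ by composing the isometric isomorphisms discussed just before the statement. First I would fix the symmetric probability measure $\mu$ whose support generates $\G$, and let $(B,\nu)$ be the associated Poisson boundary. By double ergodicity (the cited result of Kaimanovich, or \cite[Theorem 0.2]{BM2}) we have $L^\infty_{\mathrm{alt}}(B^2,\nu^{\otimes 2})^\G = 0$, hence $\cbhar(\G^2)^\G = 0$ via the Poisson transform $\Pp$, which is a $\G$-equivariant isometric isomorphism. Consequently the subcomplex $(\cbhar(\G^{\bullet+1})^\G)$ has vanishing term in degree $1$ (i.e.\ on $\G^2$), so in degree $2$ every element of $\cbhar(\G^3)^\G$ that is a cocycle is automatically \emph{not} a coboundary in this subcomplex, and the coboundary map into degree $3$ detects cohomology faithfully. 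This yields that the natural projection ${\mathcal Z}\cbhar(\G^3)^\G \to \hb^2(\G)$ is an isometric isomorphism, as already spelled out in the excerpt.

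Next I would note that $\cbhar(\G^3)^\G \subseteq \ell^\infty_{\mathrm{alt}}(\G^3)^\G \subseteq \ell^\infty(\G^3)^\G$, and that a $\mu$-pluriharmonic alternating cocycle is in particular an element of ${\mathcal Z}\ell^\infty(\G^3)^\G$. So I define
$$
\sigma \colon \hb^2(\G) \xrightarrow{\ \cong\ } {\mathcal Z}\cbhar(\G^3)^\G \hookrightarrow {\mathcal Z}\ell^\infty(\G^3)^\G,
$$
where the first arrow is the inverse of the isometric projection above, and the second arrow is the inclusion. I must check two things: (i) that $\sigma$ is a genuine \emph{section} of the projection ${\mathcal Z}\ell^\infty(\G^3)^\G \to \hb^2(\G)$ defining bounded cohomology, and (ii) that $\sigma$ is isometric. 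For (i), the inclusion $\cbhar(\G^\bullet) \hookrightarrow \ell^\infty_{\mathrm{alt}}(\G^\bullet)$ induces isometric isomorphisms in cohomology by \cite[Lemma 3.13]{BM1}, and (together with the standard identification of $\ell^\infty_{\mathrm{alt}}$ with $\ell^\infty$ in cohomology from Proposition~\ref{prop:monod}) these are compatible with the respective projections-to-cohomology maps; hence a pluriharmonic cocycle representing a class $c \in \hb^2(\G)$ still represents $c$ when viewed in ${\mathcal Z}\ell^\infty(\G^3)^\G$. For (ii), isometry of $\sigma$ follows because each arrow in the composition is isometric onto its image: the projection ${\mathcal Z}\cbhar(\G^3)^\G \to \hb^2(\G)$ is isometric (stated in the excerpt), and the inclusion of a subspace of $\ell^\infty(\G^3)$ is norm-preserving. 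One should also record that $\sigma$ is linear, which is immediate since all the maps involved are linear.

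The main obstacle, such as it is, is bookkeeping rather than a new idea: one must verify that the isometric isomorphism $\hb^2(\G) \cong {\mathcal Z}\cbhar(\G^3)^\G$ obtained from the pluriharmonic resolution is \emph{compatible} with the canonical projection ${\mathcal Z}\ell^\infty(\G^3)^\G \to \hb^2(\G)$ coming from the standard (homogeneous) resolution $\ell^\infty(\G^{\bullet+1})$. Concretely, one needs the square relating the two resolutions — the inclusion $\cbhar \hookrightarrow \ell^\infty_{\mathrm{alt}} \hookrightarrow \ell^\infty$ on the cochain level and the induced maps on cohomology — to commute, which is exactly the content of \cite[Lemma 3.13]{BM1} combined with the functoriality of the comparison maps between resolutions (uniqueness up to $\G$-homotopy of morphisms of strong resolutions into relatively injective modules, as in \cite{Ivanov}). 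Once this compatibility is in hand, the section property and the isometry are formal. I would therefore phrase the proof as: "Let $\mu$, $(B,\nu)$ be as above; the discussion preceding the statement exhibits an isometric isomorphism $p_{\mathrm{har}} \colon {\mathcal Z}\cbhar(\G^3)^\G \to \hb^2(\G)$ which is the restriction of the canonical projection $p \colon {\mathcal Z}\ell^\infty(\G^3)^\G \to \hb^2(\G)$ to the subspace of pluriharmonic cocycles. Setting $\sigma := \iota \circ p_{\mathrm{har}}^{-1}$, where $\iota \colon {\mathcal Z}\cbhar(\G^3)^\G \hookrightarrow {\mathcal Z}\ell^\infty(\G^3)^\G$ is the inclusion, we get $p \circ \sigma = p \circ \iota \circ p_{\mathrm{har}}^{-1} = p_{\mathrm{har}} \circ p_{\mathrm{har}}^{-1} = \id$, and $\|\sigma(c)\| = \|p_{\mathrm{har}}^{-1}(c)\|_\infty = \|c\|$ since both $\iota$ and $p_{\mathrm{har}}$ are isometric."
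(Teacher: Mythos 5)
Your proposal is correct and follows essentially the same route as the paper: the statement is deduced, exactly as you do, from the discussion preceding it (the Poisson transform, Kaimanovich's double ergodicity giving $\cbhar(\G^2)^\G=0$, and \cite[Lemma 3.13]{BM1}), so that the canonical projection restricted to $\Zz\cbhar(\G^3)^\G$ is an isometric isomorphism onto $\hb^2(\G)$ whose inverse, composed with the inclusion into $\Zz\ell^\infty(\G^3)^\G$, is the desired section. Your extra bookkeeping about compatibility of the pluriharmonic and homogeneous resolutions is harmless and correct, though it is essentially automatic since $\cbhar(\G^{\bullet+1})$ is a subcomplex of $\ell^\infty_{\mathrm{alt}}(\G^{\bullet+1})$.
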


\section{Relative Bounded Cohomology: Proof of Theorem~\ref{thm:main}}\label{sec:proof:main}
Let $(X,Y)$ be a pair of countable CW-spaces. Assume that $X$ is
connected and the fundamental group of every component of $Y$ is amenable. Let  $p\colon \widetilde X\to X$ be the universal covering map, set
$\Gamma:=\pi_1(X)$ and let $Y=\sqcup_{i\in I}C_i$ the decomposition of $Y$
into the union of its connected components. If  $\check C_i$ is a choice of a connected component of $p^{-1}(C_i)$ 
and $\Gamma_i$ denotes the stabilizer of $\check C_i$ in $\Gamma$, then
\bqn
p^{-1}(C_i)=\bigsqcup_{\gamma\in\Gamma/\Gamma_i}\gamma\check C_i\,.
\eqn

\noindent
The group $\G$ acts by left translations on the set
%Let us define the $\G$-space $S$ by setting
\bqn
S:=\Gamma\sqcup\bigsqcup_{i\in I}\Gamma/\Gamma_i\ .
\eqn

\noindent Being a quotient of $\pi_1(C_i)$, the group $\Gamma_i$ is amenable,
so $S$ is an amenable $\G$-space.
We  define a $\G$-equivariant measurable retraction $r\colon\widetilde X\to S$ as follows: 
let $\Ff\subset\widetilde X\smallsetminus Y'$ be a fundamental domain 
for the $\Gamma$-action on $\widetilde X\smallsetminus Y'$, where $Y'=p^{-1}(Y)$.
Define the map $r$ as follows:
\bqn
r(\gamma x):=
\begin{cases}
\hphantom{\Gamma}\,\gamma\in\Gamma&\text{ if }x\in\Ff,\\
\gamma\Gamma_i\in \Gamma/\Gamma_i &\text{ if } x \in\check C_i\,.
\end{cases}
\eqn
For every $n\geq0$ define 
\bqn
\xymatrix@1{
r^n\colon \ell^\infty_{\mathrm{alt}}(S^{n+1})\ar[r]&\cb^n(\widetilde X)}
\eqn
by 
\bqn
r^n(c)(\sigma)=c(r(\sigma_0),\dots,r(\sigma_n))\,,
\eqn
where $c\in\ell^\infty_{\mathrm{alt}}(S^{n+1})$ and $\sigma_0,...,\sigma_n\in \widetilde{X}$ 
are the vertices of a singular simplex $\sigma\colon \Delta^n\to\widetilde X$.
Clearly $(r^n)_{n\geq0}$ is a $\Gamma$-morphism of complexes extending the identity of $\RR$
and $\|r^n\|\leq1$ for all $n\geq0$.

Observe that if $n\geq1$ and $\sigma(\Delta^n)\subset Y'$, 
then there are $i\in I$ and $\gamma\in\Gamma$ such that $\sigma(\Delta^n)\subset\gamma\check C_i$.
Thus
\bqn
r(\sigma_0)=\dots=r(\sigma_n)=\gamma\Gamma_i
\eqn
and
\bqn
r^n(c)(\sigma)=c(\gamma\Gamma_i,\dots,\gamma\Gamma_i)=0\,,
\eqn
since $c$ is alternating.
This implies that %, for these values of $n$, 
the image of $r^n$ is in $\cb^n(\widetilde X,Y')$. Thus we can write $r^n=j^n\circ r_1^n$,
where $j^n:\cb^n(\widetilde X,Y')\hookrightarrow\cb^n(\widetilde X)$ is the inclusion
and $r_1^n:\ell^\infty_{\mathrm{alt}}(S^{n+1})\to\cb^n(\widetilde X,Y')$ is a norm non-increasing $\Gamma$-morphism
that induces a norm non-increasing map
in cohomology 
\bqn
\xymatrix@1{\h(r_1^n):\h^n(\ell^\infty_{\mathrm{alt}}(S^{\bullet+1})^\Gamma)\ar[r]&\hb^n(X,Y)}\,,
\eqn
for $n\geq1$.

Using the map $g^n$ defined in (\ref{eq: g_n}) and the map $\mu^n$ provided by 
Proposition~\ref{prop:monod},
we have the following diagram
\begin{equation*}
\xymatrix{\cb^n(\widetilde{X})  \ar@{-->}[rrrd]_{\mathrm{extends \ Id}_\mathbb{R}} \ar[r]^{g^n} 
& \ell^\infty(\Gamma^{n+1})  \ar[r]^{\mu^n} & \ell^\infty_{\mathrm{alt}}(S^{n+1}) \ar[rd]^{r^n}\ar[r]^{r_1^n}_{\mathrm{for \ }n\geq 1} 
&\cb^n(\widetilde{X},Y') \ar[d]^{j^n} \\
&&&\cb^n(\widetilde{X}),} 
\end{equation*}
where the dotted map is the composition $r^n\circ \mu^n \circ g^n$ 
which is a $\Gamma$-morphism  of strong resolutions by relatively injective modules
extending the identity, and hence induces the identity on 
$\hb^n(X)=\h^n(\cb^\bullet(\widetilde X)^\Gamma)$.

We proceed now to show that, for $n\geq 2$, the map
\bqn
\xymatrix@1{
\h(j^n):\hb^n(X,Y)\ar[r]&\hb^n(X)
}
\eqn
induced by $j^n$  is an isometric isomorphism in cohomology. 
In view of the long exact sequence for pairs in bounded cohomology and 
the fact that $\hb^\bullet(Y)=0$ in positive degree, we already know that $\h(j^n)$ is an  isomorphism. 
Let us set $\psi^n=r_1^n\circ \mu^n\circ g^n$. 
From the above we have 
\bqn
\h(j^n)\circ \h(\psi^n)=\id_{\hb^n(X)}\, .
\eqn
The conclusion follows form the fact that 
the maps $\h(j^n)$ and $\h(\psi^n)$ are norm non-increasing.

\section{Graphs of Groups: Proof of Theorem~\ref{thm:bounded:graph}}\label{sec:proof:graph}
In order to fix the notation, we recall some 
definitions concerning
graphs of groups, closely following~\cite{Serre}.
A graph $G$ is a pair $(V(G),E(G))$ together with a map $E(G)\to V(G)^2$, 
$e\mapsto(o(e),t(e))$ and a fixed point free involution $e\mapsto\bar e$ of $E(G)$ satisfying $o(e)=t(\bar e)$. 
The set $\bar E(G)$ of geometric edges of $G$ is defined by setting $\bar E(G)=\{\{e,\bar e\}|\,e\in E(G)\}$. 
The geometric realization $|G|$ of a graph 
$G$ is the 1-dimensional CW-complex with one vertex for every element in $V(G)$ and 
one edge for every geometric edge. Its first baricentric subdivision $G'$ has as vertices the set $V(G')=V(G)\sqcup \bar E(G)$.

Let $\mathcal G$ be a graph of groups based on the graph $G$. 
Recall that to every vertex $v\in V(G)$ is associated a group $\G_v$ and 
to every edge $e\in E(G)$ is associated a group $\G_e$ together with an injective homomorphism $h_e\colon \G_e\to \G_{t(e)}$. 
Moreover, it is required that $\G_e=\G_{\bar e}$.  
Let $\G=\pi_1(\mathcal G)$ denote the fundamental group of $\mathcal G$.
By the universal property of the fundamental group of a graph of groups \cite[Corollary 1, p.~45]{Serre}, 
for every $v\in V(G)$, $e\in E(G)$, there exist inclusions $\G_v\to \G$ and $\G_e\to \G$. 
Henceforth we will regard each $\Gamma_v$ and each $\Gamma_e$
just as a subgroup of $\G$. Observe that, since $\Gamma_e=\Gamma_{\bar e}$, it makes
sense to speak about the subgroup $\Gamma_e$ also for $e\in {\bar E}(G)$.

A fundamental result in Bass--Serre theory \cite[Theorem 12, p.~52]{Serre}
implies that $\G$ acts simplicially 
on a tree $T=(V(T), E(T))$, where
$$
V(T)=\bigsqcup_{v\in V(G)}\G/\G_v\, ,\qquad 
 E(T)=\bigsqcup_{e\in {\bar E}(G)}\G/\G_e\ .
$$
The action of
$\G$ on $V(T)$ and $E(T)$ is by left multiplication.
The tree $T$ is known as the \emph{Bass--Serre tree} of $\mathcal{G}$ (or of $\G$, when
the presentation of $\G$ as the fundamental group of a graph of group is understood).
There is an obvious projection $V(T)\to V(G)$ which sends the whole of $\G/\G_v$ to $v$.
This projection admits a preferred section that takes
any vertex $v\in V(G)$ to the coset $1\cdot \G_v\in \G/\G_v$. This allows us to 
canonically identify $V(G)$ with a subset of $V(T)$.

Now we consider the space 
$$ S_{\mathcal G}=\left(\G\times V(G)\right)\ 
%\bigsqcup_{v\in V(G)}\G v
\sqcup\bigsqcup_{e\in \bar E(G)}\G/\G_e\ .
$$
We may define an action of $\G$ on $S_\mathcal{G}$ by setting
$g_0\cdot (g,v)=(g_0g,v)$ for every $(g,v)\in \G\times V(G)$ and
$g_0\cdot (g\Gamma_e)=(g_0g)\Gamma_e$ for every $g\Gamma_e\in \G/\G_e$, $e\in {\bar E}(G)$.
 
There exists a $\Gamma$-equivariant projection $p\colon S_{\mathcal G}\to V(T')$ defined as follows: 
$p(g,v)=g\Gamma_v$ for $(g,v)\in \G\times V(G)$, and $p$ is the identity on each
$\G/\G_e$, $e\in {\bar E}(G)$.
%We also fix a section $\sigma$ of $p$: this corresponds to choosing a representative for each coset $\G/\G_v$.

Let us now suppose that our graph of groups $\mathcal{G}$ satisfies the hypothesis of Theorem~\ref{thm:bounded:graph}, 
i.e.~every $\Gamma_v$ is countable and every
$\Gamma_e$ is amenable. Under this assumption, both $\G$ and $S_\mathcal{G}$
are countable, and $\G$ acts on $S_\mathcal{G}$ with amenable stabilizers.
As a consequence of Proposition \ref{prop:monod}, 
the bounded cohomology of $\G$ can be isometrically computed from the complex $\ell^\infty_{\mathrm{alt}}(S_{\mathcal G}^{\bullet+1})$.

For every vertex $v\in V(G)$, let $S_v$ be the set 
$$
S_{v}=\G _v\sqcup\bigsqcup_{t(e)=v}\G_v/\G_e,
$$
where we identify $\G_e$ with a subgroup of $\G_v$ via the map $h_e$.
We have an obvious action of $\G_v$ on $S_v$ by left multiplication.
Since every $\Gamma_e$ is amenable, this action turns $S_v$ into an amenable
$\Gamma_v$-space.
 
The inclusion $\phi_v\colon S_{v}\to S_{\mathcal G}$ defined by $\phi_v(g)=(g,v)$
and $\phi_v(g\Gamma_e)=g\Gamma_e$, induces a chain map 
$$
\phi_v^\bullet\colon \ell^\infty_{\mathrm{alt}}(S_{\mathcal G}^{\bullet+1})\to\ell^\infty_{\mathrm{alt}}(S_{v}^{\bullet+1})\ .
$$
By construction, $\phi_v^\bullet$ is equivariant with respect to the inclusion $\G_v\to \G$, 
so Lemma~\ref{lemma:funct} implies that $\phi_v^\bullet$ induces the restriction map in bounded cohomology. 

The following result establishes the existence of a partial retraction
of the chain map $\phi^\bullet=\oplus_{v\in V(G)} \phi_v^\bullet$, and plays a fundamental role in the proof of Theorem \ref{thm:bounded:graph}.

\begin{thm}\label{thm:baricenter}
 There is a (partial) norm non-increasing chain map
$$
\psi^n\colon \bigoplus_{v\in V(G)} \linfa(S_v^{n+1})^{\G_v} \to
\linfa(S_{\Gg}^{n+1})^{\G}\, , \quad n\geq 2\,  
$$
such that the composition $\phi^n\circ\psi^n$ is the identity of $\oplus_{v\in V(G)} \linfa(S_v^{n+1})^{\G_v}$ for every $n\geq 2$.
\end{thm}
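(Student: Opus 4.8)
The plan is to build $\psi^n$ by a \emph{combinatorial barycentric projection} on the level of the $\Gamma$-sets, exploiting the tree structure. The key observation is that a tuple $(s_0,\dots,s_n)\in S_{\Gg}^{n+1}$ determines via the projection $p\colon S_{\Gg}\to V(T')$ a tuple of vertices of the barycentric subdivision $T'$ of the Bass--Serre tree; since $T'$ is a tree, the (finite) subtree spanned by $p(s_0),\dots,p(s_n)$ has a well-defined \emph{barycenter} (or a canonical edge midpoint, which is a vertex of $T'$ since we passed to the barycentric subdivision). After translating by $\Gamma$ we may assume this barycenter is one of the canonically chosen vertices $v\in V(G)\subset V(T)\subset V(T')$, and the stabilizer of that configuration meets $\Gamma_v$; this is what lets us land a value in $\linfa(S_v^{n+1})^{\Gamma_v}$. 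First I would make precise, for a tuple $\mathbf{s}=(s_0,\dots,s_n)$, the notion of its barycenter $b(\mathbf{s})\in V(T')$ and of the ``initial segment'' $\pi_j(\mathbf s)\in S_v$ obtained by pushing $s_j$ towards $b(\mathbf s)$: concretely, if $b(\mathbf s)$ projects to $v\in V(G)$, one writes $b(\mathbf s)=g\Gamma_v$ and sends $s_j$ to the first vertex along the geodesic from $p(s_j)$ to $b(\mathbf s)$, suitably recorded as an element of $g S_v$; equivariance is built in because everything is defined geodesically in the tree.

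Next I would define, for $c=(c_v)_{v\in V(G)}\in\bigoplus_v\linfa(S_v^{n+1})^{\Gamma_v}$,
$$
(\psi^n c)(s_0,\dots,s_n)\;=\;c_{v(\mathbf s)}\bigl(g^{-1}\pi_0(\mathbf s),\dots,g^{-1}\pi_n(\mathbf s)\bigr),
$$
where $v(\mathbf s)\in V(G)$ is the projection of the barycenter $b(\mathbf s)=g\Gamma_{v(\mathbf s)}$; one must check this is independent of the coset representative $g$ (using $\Gamma_{v}$-invariance of $c_v$), that it is $\Gamma$-invariant (using equivariance of $b$, $p$ and the $\pi_j$), that it is alternating (the barycenter and the assignment $j\mapsto\pi_j$ are equivariant for the symmetric group action on coordinates), and that $\|\psi^n\|\le 1$ (immediate from the definition, since we just plug entries into $c$). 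Then I would verify $\phi^n\circ\psi^n=\id$: if $\mathbf s=(\phi_w(t_0),\dots,\phi_w(t_n))$ for $t_j\in S_w$, all the $p(s_j)$ lie in the ``star'' attached to the vertex $w\in V(G)\subset V(T')$ — edges $\Gamma_e$ with $t(e)=w$ projecting to the edge-midpoints adjacent to $w$, and elements of $\Gamma_w$ projecting to $w$ itself — so the spanned subtree has barycenter exactly $w$ (here the restriction $n\ge 2$ and the use of the \emph{barycentric} subdivision matter, so that the barycenter of a star-shaped configuration is its center rather than a stray edge), whence $v(\mathbf s)=w$, $g=1$, and $\pi_j(\mathbf s)=t_j$, giving $(\phi^n\psi^n c)_w(t_0,\dots,t_n)=c_w(t_0,\dots,t_n)$.

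The chain-map property $\delta\circ\psi^n=\psi^{n+1}\circ\delta$ requires that the barycenter and the maps $\pi_j$ behave coherently under face maps; this is the delicate point, because deleting a vertex $s_j$ can move the barycenter of the spanned subtree. The main obstacle I expect is precisely to arrange the definition so that $\psi$ commutes with the differential despite this instability — one natural fix is to define $\psi$ not by a single barycenter but by averaging/summing over the (finitely many) vertices of the spanned subtree with tree-combinatorial weights that transform correctly under taking faces, i.e. to mimic the classical ``straightening along a tree'' cone construction, so that the barycentric choice is replaced by a genuine chain homotopy--compatible operator. In the write-up I would either (a) show the naive barycenter already works for $n\ge 2$ because in that range the configurations that arise have stable barycenter, or (b) replace $b(\mathbf s)$ by the full ordered list of subtree vertices and let $\psi^n$ record $c$ evaluated at the pushes toward that canonical vertex path, checking the simplicial identities by the standard tree calculus. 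Either way the norm bound, equivariance, alternation, and the retraction identity $\phi^n\psi^n=\id$ go through as above; only the commutation with $\delta$ needs the careful tree bookkeeping.
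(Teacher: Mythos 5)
Your overall strategy is the same as the paper's -- retract a tuple of $S_{\Gg}$ onto the star of a distinguished vertex of the Bass--Serre tree and evaluate the given cochain there -- and your treatment of equivariance, alternation, the norm bound and the identity $\phi^n\circ\psi^n=\id$ is essentially right (modulo a directional slip: to land in $S_v=p^{-1}(N(v))$ you must send $s_j$ to the neighbour of the barycenter on the geodesic towards $p(s_j)$, keeping $s_j$ unchanged if $p(s_j)$ already lies in the star; the ``first vertex from $p(s_j)$'' is adjacent to $p(s_j)$ and does not lie in $gS_v$). The genuine gap is exactly the point you flag and then leave open: the chain-map property. Your proposed fix (a) fails -- the barycenter, in the relevant separating sense, need not even exist for a given tuple (four points along a path in $T'$ admit none), and removing a coordinate can create or move it, so there is no range $n\geq 2$ in which configurations have a ``stable barycenter''; and fix (b) is only a gesture towards a construction, with no definition of weights and no verification of the simplicial identities. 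As it stands the proposal defines a candidate map but does not prove $\delta\circ\psi^n=\psi^{n+1}\circ\delta$.

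The paper's resolution is a summation trick that makes the chain-map property formal. For each $w\in V(T)$ one has a retraction of sets $r^0_w\colon S_{\Gg}\to S_w=p^{-1}(N(w))$ (send $x$ to the unique $p$-preimage of the endpoint of the first edge of $[w,p(x)]$ if $x\notin S_w$), applied coordinatewise, and one sets
$$
\psi^n(\oplus_v f_v)(x)\;=\;\sum_{w\in V(T)} f_{\sigma(w)^{-1}w}\bigl(\sigma(w)^{-1}r^n_w(x)\bigr),
$$
where $\sigma(w)\in\G$ is a chosen representative of the coset $w$. If $w$ is not a barycenter of $(p(x_0),\dots,p(x_n))$ then $r^n_w(x)$ has a repeated coordinate, so the corresponding term vanishes because $f_v$ is alternating; uniqueness of the barycenter for at least three points (this is where $n\geq 2$ enters) gives at most one nonzero term, whence the norm bound, and the single surviving term is precisely your ``evaluate at the barycenter'' prescription. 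But written as a sum of pullbacks along the set maps $r_w$, each summand is manifestly compatible with the homogeneous coboundary, and since only finitely many terms are nonzero at any argument (at most one per face), the sum commutes with $\delta$; this is what replaces the ``careful tree bookkeeping'' you postpone. Your instinct in fix (b) -- summing over vertices instead of choosing one -- is the right idea, but the proof needs this specific formulation (sum over \emph{all} $w\in V(T)$, with alternation doing the selection) to close the argument.
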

\begin{proof}
To define the map $\psi^n$ we need the notion of a \emph{barycenter} of an $(n+1)$-tuple $(y_0,\ldots,y_n)$ in $V(T')^{n+1}$. 
Given a vertex $v\in V( T')$, let $N(v)\subseteq V(T')$ be the set of vertices having combinatorial distance (in $T'$)
at most one from $v$.
The vertex $\bar y\in V(T)\subseteq V(T')$ is  a barycenter of $(y_0,\ldots,y_n)\in V(T')^{n+1}$ if for any $y_i$, $y_j$ in $V(T')\setminus 
\{\bar{y}\}$, $i\neq j$,
the points $y_i$ and $y_j$ belong to different connected components of $|T'|\bsl \{\bar y\}$.
It follows readily from the definitions that there exists at most one barycenter for any $n$-tuple provided that $n\geq 3$.

Let $p\colon S_\mathcal{G}\to V(T')$ be the projection defined above. 
For $v\in V(G)$, let us
identify $S_v$ with $\phi_v(S_v)\subseteq S_{\mathcal{G}}$, and recall that $V(G)$
is canonically identified with a subset
 of $V(T)\subseteq V(T')$. Under these identifications we have 
 $S_v=p^{-1}(N(v))$ for every $v\in V(G)$, and we coherently set 
$S_w=p^{-1}(N(w))\subseteq S_{\mathcal{G}}$ for  every 
$w\in V(T)$.
 
Let us fix $w\in V(T)$. We define a retraction $r^0_w\colon S_{\mathcal{G}}\to S_w$ as follows: 
if $x_0\in S_w$, then $r^0_w(x_0)=x_0$; otherwise, if $y_0$ is the endpoint of the first edge of the combinatorial path $[w,p(x_0)]$ in $T'$, then
 $r^0_w(x_0)$ is the unique preimage of $y_0$ via $p$.
We extend $r^0_w$ to a chain map $r^\bullet_w\colon  S_{\mathcal{G}}^{\bullet+1}\to S_w^{\bullet+1}$  by setting  $r^n_w(x)=(r_w^0(x_0),\ldots,r_w^0(x_n))$
for $x=(x_0,\ldots,x_n)$.
Notice that if $w$ is not a barycenter of $(p(x_0),\ldots,p(x_n))$, 
then the $(n+1)$-tuple $r_w^n(x)$ has at least two coordinates that 
are equal, so any alternating cochain vanishes on $r_w^n(x)$.

We are now ready to define the (partial) chain map $\psi^\bullet$.
Recall that every vertex $w\in V(T)$ is a coset in $\G/\G_v$ for
some $v\in V(G)$. For every $w\in V(T)$ we choose a representative 
$\sigma(w)\in \G$ of $w$, and we observe that $\sigma(w)^{-1}w\in V(G)\subseteq V(T)$.
Let $x\in S_{\mathcal{G}}^{n+1}$, $n\geq 2$.
We have
$\sigma(w)^{-1}r_w^n(x)\in S^{n+1}_{\sigma(w)^{-1}w}$, so
for every $(\oplus_{v\in V(G)} f_v) \in \oplus_{v\in V(G)}\ell^{\infty}_\mathrm{alt} (S^{n+1}_v)$ it makes sense to set
$$
\psi^n( \oplus_{v\in V(G)} f_v) (x)=
\sum_{w\in V(T)}f_{\sigma(w)^{-1}w}(\sigma(w)^{-1}r^n_w(x))\ .
$$

\noindent
Since the $f_v$ are alternating there is at most one non-zero term in the sum, corresponding to the barycenter (if any) 
of $(p(x_0),\ldots,p(x_n))$. Moreover $\psi^n$, $n\geq 2$, is a (partial) chain map and it is easy to check 
that $\psi^n(\oplus_{v\in V(G)} f_v)$ is $\G$-invariant provided 
that $f_v$ is $\G_v$-invariant for every $v\in V(G)$.
\end{proof}

We are now ready to finish the proof of Theorem~\ref{thm:bounded:graph}. 
\begin{proof}[Proof of Theorem \ref{thm:bounded:graph}]
Since the first bounded cohomology of any group vanishes in degree one, 
it is sufficient to consider the case $n\geq2$. Being a norm non-increasing chain map 
defined for every degree $n\geq 2$, $\psi^n$ induces a norm non-increasing map 
$\Theta^n=\h (\psi^n)$ in bounded cohomology for every $n\geq 3$. 
Moreover, being induced by a right inverse of $\phi^n$, the map $\Theta^n$ is a right
inverse of $\h (i_v^n)$ for 
every $n\geq 3$. This implies that $\Theta^n$ is an isometric embedding.

If $n=2$, it is not clear why $\psi^2$ should send coboundaries of bounded 1-cochains to 
coboundaries of bounded 1-cochains. In fact, we will show in the last part of this section 
that this is not the case in general.
This difficulty may be circumvented by
exploiting the fact, proved in \S~\ref{sec: Resolutions}, that any element in $\hb^2(\G)$
admits a special norm-minimizing representative.

In fact let us define the map $\Theta^2$ as the composition of the maps
\bqn
\xymatrix{\oplus \hb^2(\G_v)\ar[r]^-{\oplus\sigma_v}&\oplus\Zz\cbhar(\G_v^3)^{\G_v}\ar[r]^{\oplus \mu_v}&\oplus\Zz\linfa(S_v^3)^{\G_v}\ar[r]^-{\psi^2}&\Zz\linfa(S_{\Gg}^3)^\G\ar[r]&\hb^2(\G)}
\eqn

\noindent where $\sigma_v:\hb^2(\G_v)\to \Zz\cbhar(\G_v^3)^{\G_v}$ is the map described in Proposition~\ref{prop:har}, 
$\mu_v %:\oplus\Zz\cbhar(\G_v^3)^{\G_v}\to\oplus\Zz\linfa(S_v^3)^{\G_v}
$ is the morphism constructed in Proposition~\ref{prop:monod}, and $\psi^2$ is the map of Theorem~\ref{thm:baricenter}.

All the maps involved are norm non-increasing, hence the same holds for $\Theta^2$. 
Moreover, $\Theta^2$ induces a right inverse of the restriction since the following diagram is commutative
$$
\xymatrix{
 & \Zz\linfa(S_\G^3)^\G\ar[r]\ar[d]^{\phi^2} 
& \hb^2(\G)\ar[d] \\ 
%\bigoplus_{j=1}^k \hb^2(\widehat{X}_j)\ar[r]^{\oplus_{j=1}^k s_j} 
%\ar@/_2pc/[rrr]_{\rm Id} &
\bigoplus \Zz\cbhar(\G_v^3)^{\G_v}\ar[r]^{\oplus\mu_v}
\ar[ru]^{\psi^2\circ\oplus\mu_v}&
\bigoplus \Zz \linfa(S_v^3)^{\G_v}\ar[r] &
\bigoplus \hb^2(\G_v). \ar@/^2pc/[ll]^{\oplus \sigma_v} 
}
$$
This finishes the proof of the theorem.

\end{proof}

\begin{rem}\label{split_quasimorphisms}
Let us now briefly comment on the fact that the map $\psi^2$ does not send, in general, 
coboundaries of bounded 1-cochains to coboundaries of bounded 1-cochains. 
We will be only considering free products, that is the case in which the graph $G$ is a tree and 
all edge groups are trivial. In \cite[Proposition 4.2]{Rolli} Rolli constructed a linear map
\begin{equation}\label{rolli:eq}
\bigoplus_{v\in V(G)}\linfo(\G_v)\to \hb^2(\G)
\end{equation}
and showed that this map is injective. Here $\linfo(\G_v)$ is the set of bounded functions on $\G_v$ such that $f(g^{-1})=-f(g)$.

We denote by $(\bar {\rm C}^\bullet(\G),\bar{d}^\bullet)$ 
(resp.~$(\bar {\rm C}^\bullet_{\rm b}(\G),\bar{d}^\bullet)$) 
the space of inhomogeneous (resp. bounded inhomogeneous) cochains on $\G$, 
and recall that $\bar {\rm C}^\bullet(\G)$ (resp.~$\bar {\rm C}^\bullet_{\rm b}(\G)$) 
is isometrically isomorphic to the corresponding
module of homogeneous $\G$-invariant cochains via the chain map $h^\bullet$ given by 
$h^n(f)(x_0,\ldots,x_n)= f(x_0^{-1}x_1,\ldots, x_{n-1}^{-1}x_n)$. We denote by $\bar {\rm C}^n_{\mathrm{alt}}(\G)$
(resp.~$\bar {\rm C}^n_{{\rm b},\mathrm{alt}}(\G)$)
the subspace of $\bar {\rm C}^n(\G)$ (resp.~$\bar {\rm C}^n_{\rm b}(\G)$) corresponding via $h^n$ to alternating cochains
on $\G^{n+1}$. 

Let $\alpha\colon\oplus\linfo(\G_v)\to \bar{\rm C}_{\mathrm{alt}}^1(\G)$ be defined by 
$\alpha (\oplus f_v)(x)=\sum f_{v_i}(x_i)$, where $x_0\ldots x_n$ is the reduced expression for $x$ and $x_i\in \G_{v_i}$.
Even if the image of $\alpha$ is  not contained in $ \bar {\rm C}^1_{\mathrm{b},\mathrm{alt}}(\G)$ in general,
it is proved in~\cite{Rolli} that the image of the composition $R=\bar{d}\circ \alpha$ consists of bounded cocycles. 
Moreover, $R$ admits the explicit expression 
\begin{equation}\label{expression}
R(\oplus f_v)(x,y)=f_v(\gamma_2)-f_v(\gamma_1\gamma_2)+f_v(\gamma_1)\ ,
\end{equation}
where $a\gamma_1b$ and $b^{-1}\gamma_2c$ are reduced expressions for $x$ and $y$ 
with $\gamma_1$ and $\gamma_2$ maximal subwords belonging 
to the same vertex group $\Gamma_v$ and $\gamma_1\neq \gamma_2^{-1}$. 

Let us now consider the following diagram:
\bqn
\xymatrix{
\oplus\linfo(\G_v)\ar[r]^R\ar[d]^{\mu\circ h^2\circ \bar d}& \Zz\bar {\rm C}^2_{\rm b}(\G)\ar[r]\ar[d]^{\mu\circ h^2}&\hb^2(\G)\ar[d]
\\
\oplus\Zz\linfa(S_v^3)^{\G_v}\ar[r]^{\psi^2}&\Zz\linfa(S_{\mathcal{G}}^3)^{\G}\ar[r]&\hb^2(\G).
}\eqn
Rolli's map~\eqref{rolli:eq} is defined as the composition of the 
horizontal arrows on the top.
We claim that the diagram is commutative.
Since we are in the case of a free product,
we have an obvious identification between $S_{\mathcal{G}}$ and $\G\times V(G')$,
and the map $\mu^\bullet \colon \cb^\bullet(\G)\to \cb^\bullet(S_{\mathcal{G}})$ is
induced by the projection $ \G\times V(T')\to \G$.
The commutativity of the square on the right is now a consequence of Lemma~\ref{lemma:funct}.
To show that the left square commutes,
let us consider a triple $((x_0,v_0),(x_1,v_1),(x_2,v_2))\in S_{\mathcal{G}}^3$. Then 
one may
verify that the barycenter of the triple  $(p(x_0,v_0),p(x_1,v_1),p(x_2,v_2))\in V(T')^3$ 
is the vertex $a\Gamma_v$ where $x_0^{-1}x_1=a\gamma_1b$, $x_1^{-1}x_2=b^{-1}\gamma_2c$, 
and $\gamma_1,\gamma_2\in \G_v$ satisfy $\gamma_1\neq \gamma_2^{-1}$. 
Using this fact and Equality~\eqref{expression} it is easy to verify
that also
the square on the left is commutative.

Summarizing, as a corollary of Rolli's result we have shown that the image of 
$\mu\circ h^2\circ \overline d$ is a big subspace of coboundaries in $\oplus\linfa(S_v^3)^{\G_v}$ that are not taken by $\psi^2$ to 
coboundaries in $\linfa(S_{\mathcal{G}}^3)^{\G}$.
In particular, the restriction
of $\psi^2$ to bounded cocycles does not induce a well-defined map in bounded cohomology.
\end{rem}

\begin{rem}\label{countable:rem}
 The assumption that $G$ is finite did not play an important role in our proof of Theorem~\ref{thm:bounded:graph}. 
 Let us suppose that $G$ is countable, and take an element $\phi\in \hb^n(\Gamma)$. Then 
 the restriction $\h (i_v^n)(\phi)\in \hb^n(\G_v)$ can be non-null for infinitely many
$v\in V(G)$. However, we have $\|\h(i_v^n)(\phi)\|_\infty\leq \|\phi\|_\infty$ for every $v\in V(G)$, 
so there exists a well-defined map
$$
\prod_{v\in V(G)} \h (i^n_v) \ \colon \ \hb^n(\G)\longrightarrow \left(\prod_{v\in V(G)} \hb^n (\G_v)\right)^{\rm ub} \ ,
$$
where $\left(\prod_{v\in V(G)} \hb^n (\G_v)\right)^{\rm ub}$ is the subspace
of uniformly bounded elements of $\prod_{v\in V(G)} \hb^n (\G_v)$.
Our arguments easily extend to the case when $G$ is countable to prove
that, for every $n\geq 2$, there exists an isometric embedding
$$
\Theta^n\colon
\left(\prod_{v\in V(G)} \hb^n (\G_v)\right)^{\rm ub} \longrightarrow \hb^n(\G)
$$
which provides a right inverse to $\prod_{v\in V(G)} \h (i^n_v)$.
\end{rem}

\section{Mapping cones and Gromov Equivalence Theorem}\label{sec:equi}
Let $(X,Y)$ be a topological pair.
As mentioned in the introduction, Gromov considered in \cite{Gromov_82} 
the one-parameter family of norms on $\ch_n(X)$ defined by $\|c\|_1(\theta)=\|c\|_1+\theta\|\partial_n c\|_1$. 
All these norms are equivalent but distinct, and $\ch_n(Y)$ is a closed subspace 
of $\ch_n(X)$ with respect to any of these norms. Therefore, the norm $\|\cdot \|_1(\theta)$ descends to a quotient norm
on $\ch_n(X,Y)$, and to a quotient seminorm on $\h_n(X,Y)$.
All these (semi)norms will be denoted by $\|\cdot\|_1(\theta)$. They admit a useful description that exploits
a cone construction for relative singular homology analogous to
Park's cone construction for relative $\ell^1$-homology~\cite{Park} (see also \cite{Loeh}). 

Let us denote by
$i_n\colon \ch_n(Y)\to \ch_n(X)$ the map induced by the inclusion $i\colon Y\to X$.
The homology mapping cone complex of $(X,Y)$ 
is the complex 
$$\left(\ch_n(Y\to X),\overline{d}_n\right))=\left(\ch_n(X)\oplus \ch_{n-1}(Y), 
\left(\begin{smallmatrix}\partial_n&i_{n-1}\\0&-\partial_{n-1}\end{smallmatrix}\right)\right),$$ 
where $\partial_\bullet$ denotes the usual differential both of $\ch_\bullet(X)$ and of $\ch_\bullet (Y)$.
The homology of the mapping cone $(\ch_\bullet(Y\to X), \overline{d}_\bullet)$ 
is  denoted by $\h_\bullet(Y\rightarrow X)$. 
For every $n\in\mathbb{N}$, $\theta\in[0,\infty)$ one can endow $\ch_n(Y\to X)$ with the norm
$$\|(u,v)\|_1(\theta)=\|u\|_1+ \theta \|v\|_1\ , $$
which induces in turn a seminorm, still denoted by $\|\cdot \|_1 (\theta)$, 
on $\h_n(Y\rightarrow X)$.\footnote{In~\cite{Park_hom}, Park restricts her attention
only to the case $\theta\geq 1$.}

The chain map
\begin{equation}\label{betaPark:eq}
\beta_n\colon \ch_n(Y\to X)\longrightarrow \ch_n(X,Y)\, ,\qquad
\beta_n (u,v)=[u]
\end{equation}
induces a map $\h(\beta_n)$
in homology. 

\begin{lemma} \label{lem: theta iso} The map
$$\h(\beta_n)\colon\left( \h_n(Y\rightarrow X),\|\cdot \|_1(\theta)\right)  \longrightarrow \left( \h_n(X,Y),\|\cdot \|_1(\theta)\right)$$
is an isometric isomorphism for every $\theta\in [0,+\infty)$.
\end{lemma}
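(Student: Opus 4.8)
The plan is to analyze the chain map $\beta_n$ directly and show that it induces an isometry in homology. First I would observe that $\beta_n$ is surjective on chains, since every relative cycle $u \in \ch_n(X)$ with $\partial_n u \in \ch_{n-1}(Y)$ gives a cycle $(u, -\partial_n u)$ in the mapping cone (indeed $\overline d_n(u, -\partial_n u) = (\partial_n u - \partial_n u, \partial_{n-1}\partial_n u) = 0$), and $\beta_n(u,-\partial_n u) = [u]$. Moreover $\|(u,-\partial_n u)\|_1(\theta) = \|u\|_1 + \theta\|\partial_n u\|_1 = \|u\|_1(\theta)$, which is exactly the norm on $\ch_n(X,Y)$ that descends from $\|\cdot\|_1(\theta)$ on $\ch_n(X)$. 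This strongly suggests the map $(u,v) \mapsto (u, -\partial_n u)$ is a homotopy inverse at the level of cycles.

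Next I would identify the kernel of $\h(\beta_n)$ and check injectivity. A class in the kernel is represented by a cocycle $(u,v)$ with $u = \partial_{n+1} a + i_n b$ for some $a \in \ch_{n+1}(X)$, $b \in \ch_n(Y)$. Then $(u,v) - \overline d_{n+1}(a, -b) = (u - \partial_{n+1}a - i_n b, v + \partial_n b) = (0, v + \partial_n b)$. Since $(u,v)$ is a cocycle, $\partial_n u = i_{n-1}(-v) $... more precisely $\overline d_n(u,v) = (\partial_n u + i_{n-1} v, -\partial_{n-1} v) = 0$, so $v$ is a cycle in $\ch_{n-1}(Y)$ and $\partial_n u = -i_{n-1} v$; after the correction we get a cocycle of the form $(0, w)$ with $w = v + \partial_n b$ a cycle in $\ch_{n-1}(Y)$. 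Such $(0,w)$ is a coboundary iff $w = -\partial_n$(something) $+ i_n$(something), i.e. iff $[w] = 0$ in $\h_{n-1}(Y)$; but this need not hold, so $\h(\beta_n)$ has the kernel $\h_{n-1}(Y)$ — wait, that contradicts the claimed isomorphism. The resolution is that I must recheck: $(0,w)$ is actually $\overline d_{n+1}(0, -\bar w)$ whenever $w = \partial_n \bar w$... no. Actually $(0,w)$ being a cocycle forces $\partial_{n-1} w = 0$, and the long exact sequence of the mapping cone identifies $\h_n(Y\to X)$ with the relative homology $\h_n(X,Y)$ via $\beta_n$ precisely because the mapping cone of $i\colon Y \to X$ computes relative homology — this is the standard fact, and the point of the lemma is purely that the identification is \emph{isometric} for each $\theta$. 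So I would not belabor the isomorphism (it is classical) but focus the argument on norms.

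For the isometry, I would argue in both directions. The map $\h(\beta_n)$ is norm non-increasing: given a cycle $(u,v)$ representing a class, $\beta_n(u,v) = [u]$ and $\|[u]\|_1(\theta) \le \|u\|_1 \le \|u\|_1 + \theta\|v\|_1 = \|(u,v)\|_1(\theta)$, and taking infima over representatives gives $\|\h(\beta_n)(\alpha)\|_1(\theta) \le \|\alpha\|_1(\theta)$. For the reverse, given a relative homology class $\gamma \in \h_n(X,Y)$ and any representative $u \in \ch_n(X)$ with $\partial_n u \in \ch_{n-1}(Y)$, the pair $(u, -\partial_n u)$ is a cycle in the mapping cone with $\beta_n(u,-\partial_n u) = [u] = \gamma$ and $\|(u,-\partial_n u)\|_1(\theta) = \|u\|_1 + \theta\|\partial_n u\|_1 = \|u\|_1(\theta)$. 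Hence the preimage class $\alpha := [(u,-\partial_n u)]$ satisfies $\|\alpha\|_1(\theta) \le \|u\|_1(\theta)$, and $\h(\beta_n)(\alpha) = \gamma$; taking the infimum over all such representatives $u$ of $\gamma$ gives $\|\h(\beta_n)^{-1}(\gamma)\|_1(\theta) \le \|\gamma\|_1(\theta)$, i.e. $\h(\beta_n)$ is norm non-decreasing as well. Combined with the first direction, $\h(\beta_n)$ is isometric.

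The main obstacle I anticipate is the bookkeeping to confirm that $\h(\beta_n)$ is genuinely a bijection — specifically checking that the section $u \mapsto (u, -\partial_n u)$ is well-defined on homology classes (independent of representative up to cone-boundaries) and surjective onto $\h_n(Y \to X)$. Concretely, if $u' = u + \partial_{n+1}a + i_n b$ is another representative with $\partial_n u' \in \ch_{n-1}(Y)$, I need $(u', -\partial_n u') - (u, -\partial_n u) = \overline d_{n+1}(a, -b) + (i_n b, \partial_n b - \partial_n i_n b + \ldots)$ to be a cone-boundary plus possibly a cycle of the form $(0,w)$ with $[w]=0$; tracking that $[w] = 0$ in $\h_{n-1}(Y)$ uses exactly the amenability-free part of the statement and the defining property of relative cycles. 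Once this is pinned down, the norm estimates above are immediate, so the entire content is the (standard) identification of the mapping cone homology with relative homology together with the transparent compatibility of the two $\theta$-norms under $\beta_n$ and its section.
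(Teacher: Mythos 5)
Your proposal is correct and follows essentially the same route as the paper: both establish the isomorphism via the explicit inverse $[u]\mapsto[(u,-\partial_n u)]$ and deduce the isometry from the observation that a mapping-cone cycle $(u,v)$ satisfies $i_{n-1}(v)=-\partial_n u$, so that $\|(u,v)\|_1(\theta)=\|u\|_1+\theta\|\partial_n u\|_1=\|u\|_1(\theta)$ and every relative-cycle representative of a class in $\h_n(X,Y)$ arises from such a cycle. Two minor repairs: the intermediate inequality $\|[u]\|_1(\theta)\leq\|u\|_1$ is not valid in general (the $\theta$-norm of $[u]$ carries the term $\theta\|\partial_n u\|_1$), but it is unnecessary since $\|[u]\|_1(\theta)\leq\|u\|_1+\theta\|v\|_1=\|(u,v)\|_1(\theta)$ holds directly; and your kernel detour resolves more simply than you indicate, because a cocycle of the form $(0,w)$ forces $i_{n-1}(w)=0$ and hence $w=0$, as $i_{n-1}$ is injective on chains, so no appeal to $\h_{n-1}(Y)$ is needed.
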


\begin{proof} 
It is immediate to check that $\h(\beta_n)$ admits
the inverse map 
$$
\h_n(X,Y)\to \h_n(Y\to X)\, ,\qquad 
[u]\mapsto [(u,-\partial_n u)]\, .
$$ 
%is well-defined and provides an inverse map to $\h(\beta_n)$.
Therefore, $\h(\beta_n)$ is an isomorphism, and we are left to show
that it is norm-preserving.

Let us set
$$
\beta'_n\colon \ch_n(Y\to X)\to \ch_n (X)\, ,\qquad 
\beta'_n(u,v)=u\, .
$$
By construction, $\beta_n$ is the composition of $\beta_n'$ with the 
natural projection $\ch_n(X)\to \ch_n(X,Y)$.
Observe that an element $(u,v)\in \ch_n(Y\to X)$ is a cycle if and only if
$\partial_nu=-i_{n-1}(v)$. As a consequence,
although the map $\beta'_n$ is not norm non-increasing in general, 
it does preserve norms when restricted to $\Zz\ch_n(Y\rightarrow X)$. 
Moreover, every chain in $C_n(X)$ representing a relative cycle is contained in
$\beta'_n (\Zz\ch_n(Y\rightarrow X))$,
and this concludes the proof.
\end{proof}

As is customary when dealing with seminorms in homology, in order to control the seminorm $\|\cdot \|_1(\theta)$ it is useful to 
study the topological dual of $(\ch_n(Y\to X),$ $\|\cdot\|_1(\theta))$, and exploit duality. 
If $(C_\bullet,d_\bullet)$ is a normed chain complex (i.e.~a chain complex of normed real vector spaces), 
then for every $n\in\mathbb{N}$ one may consider the topological
dual $D^n$ of $C_n$, endowed with the dual norm. 
The differential $d_n\colon C_n\to C_{n-1}$ induces a differential $d^{n-1}\colon D^{n-1}\to D^n$, 
and we say that $(D^\bullet,d^\bullet)$ is the dual normed chain complex of $(C_\bullet,d_\bullet)$. 
The homology (resp.~cohomology) of the complex $(C_\bullet,d_\bullet)$ (resp.~$(D^\bullet,d^\bullet)$) 
is denoted by $\h_\bullet(C_\bullet)$ (resp.~$\hb^\bullet(D^\bullet)$). We denote the norms on $C_n$ and $D^n$
and the induced seminorms on $\h_n(C_\bullet)$ and $\hb^n(D^\bullet)$ respectively
by $\|\cdot \|_C$ and $\|\cdot\|_D$.
The duality pairing between $D^n$ and $C_n$ induces the \emph{Kronecker product}
$$
\langle \cdot,\cdot \rangle\colon \hb^n(D^\bullet)\times \h_n(C_\bullet)\to \RR\ .
$$

By the Universal Coefficient Theorem, taking (co)homology commutes with taking \emph{algebraic} duals.
However, this is no more true when replacing algebraic duals with topological duals,
so $\hb^n(D^\bullet)$ is not isomorphic to
the topological dual of $\h_n(C_\bullet)$ in general (see e.g.~\cite{Loeh} for a thorough discussion of this issue). 
Nevertheless, the following well-known
consequence of Hahn-Banach Theorem establishes an important relation between
$\hb^n(D^\bullet)$ and $\h_n(C_\bullet)$. 
We provide a proof for the sake of completeness (and because in the available formulations of this result the maximum is replaced by a supremum).
\begin{lemma}\label{lemma:duality}
 Let $(C_\bullet,\|\cdot\|_C)$ be a normed chain complex with dual normed chain complex $(D^\bullet,\|\cdot\|_D)$. 
 Then, for every $\alpha\in \h_n(C_\bullet)$ we have 
$$
\|\alpha\|_C=\max \{ \langle\beta,\alpha\rangle\, |\,  \beta\in \hb^n(D^\bullet),\,  \|\beta\|_D\leq 1\}\ .
$$
\end{lemma}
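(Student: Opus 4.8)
The plan is to prove the two inequalities separately, the non-trivial one being that the supremum is attained and equals $\|\alpha\|_C$. First I would dispose of the easy direction: for any cocycle $\beta\in D^n$ representing a class with $\|\beta\|_D\le 1$ and any cycle $z\in C_n$ representing $\alpha$, one has $\langle\beta,\alpha\rangle=\langle\beta,z\rangle=\beta(z)$ (the pairing does not depend on the chosen representatives, since $\beta$ vanishes on boundaries and a coboundary vanishes on cycles), hence $|\langle\beta,\alpha\rangle|\le\|\beta\|_D\|z\|_C\le\|z\|_C$; taking the infimum over representatives $z$ of $\alpha$ gives $\langle\beta,\alpha\rangle\le\|\alpha\|_C$, so the right-hand side is $\le\|\alpha\|_C$.

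For the reverse inequality I would use Hahn--Banach on the quotient seminorm. Write $Z_n=\ker(d_n\colon C_n\to C_{n-1})$ for the space of cycles and $B_n=\operatorname{im}(d_{n+1})$ for the boundaries; then $\|\alpha\|_C=\inf_{b\in \overline{B_n}}\|z+b\|_C$ where $z$ is any cycle representing $\alpha$, and this infimum defines the quotient seminorm on $Z_n/\overline{B_n}$. If $\|\alpha\|_C=0$ there is nothing to prove (take $\beta=0$, so the maximum is attained), so assume $\|\alpha\|_C>0$. Then $z\notin\overline{B_n}$, and by Hahn--Banach there is a continuous linear functional $\beta_0$ on $C_n$ with $\beta_0|_{\overline{B_n}}=0$, $\beta_0(z)=\|\alpha\|_C$, and $\|\beta_0\|_D=1$: indeed, apply Hahn--Banach to the one-dimensional functional on $\RR z\oplus\overline{B_n}$ sending $z\mapsto\|\alpha\|_C$ and $\overline{B_n}\mapsto 0$, whose norm on that subspace is exactly $\inf_b\|z+b\|_C/\|\alpha\|_C\cdot\|\alpha\|_C$—more precisely its norm is $1$ because $|{\lambda\|\alpha\|_C}|=|\lambda|\,\|\alpha\|_C\le|\lambda|\,\|z+b/\lambda\|_C=\|\lambda z+b\|_C$ for all $\lambda\ne 0$, $b\in\overline{B_n}$—and extend norm-preservingly to all of $C_n$.

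The remaining point, which is the only genuine subtlety, is to check that $\beta_0$ is a \emph{cocycle}, i.e.\ that $d^n\beta_0=0$, so that it defines a class in $\hb^n(D^\bullet)$; this is what makes the Kronecker product well-defined and the maximum meaningful. Here I would argue that $\beta_0$ a priori only vanishes on $\overline{B_n}=\overline{\operatorname{im}\,d_{n+1}}$, which says precisely $\beta_0\circ d_{n+1}=0$, i.e.\ $d^n\beta_0=0$ by definition of the dual differential. So $\beta_0$ is automatically a cocycle, its class $\beta\in\hb^n(D^\bullet)$ satisfies $\|\beta\|_D\le\|\beta_0\|_D=1$, and $\langle\beta,\alpha\rangle=\beta_0(z)=\|\alpha\|_C$. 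Combined with the easy direction this shows the supremum equals $\|\alpha\|_C$ and is attained by $\beta$, proving the lemma. The main thing to be careful about is working with the \emph{closure} $\overline{B_n}$ throughout (the quotient seminorm only sees the closure of the boundaries), which is exactly what guarantees both that the Hahn--Banach functional exists with the right norm and that it is a cocycle.
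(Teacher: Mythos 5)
Your proof is correct and follows essentially the same route as the paper's: both handle the trivial case $\|\alpha\|_C=0$ separately and otherwise apply Hahn--Banach relative to the \emph{closure} of the boundaries, which is exactly what makes the resulting functional a cocycle of norm at most one pairing to $\|\alpha\|_C$ with $\alpha$. The only (immaterial) difference is that the paper produces the norming functional on the quotient $C_n/\overline{B_n}$ and pulls it back, whereas you extend a functional defined on $\RR z\oplus\overline{B_n}$; these are two formulations of the same Hahn--Banach argument.
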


\begin{proof}
The inequality $\geq$ is obvious.
Let $a\in C_n$ be a representative of $\alpha$. In order to conclude 
it is enough to find an element $b\in D^n$
such that $d^nb=0$, $b(a)=\|\alpha\|_C$ and $\| b\|_D\leq 1$.
If $\|\alpha\|_C=0$ we may take $b=0$. Otherwise,
let $V\subseteq C_n$ be the closure of 
$d_{n-1} C_{n-1}$ in $C_n$, and put on the quotient
$W:=C_n/V$ the induced seminorm $\|\cdot \|_W$. Since $V$ is closed,
such seminorm is in fact a norm.
By construction, $\|\alpha\|_C=\| [a]\|_W$. Therefore, Hahn-Banach Theorem provides
a functional $\overline{b}\colon W\to \mathbb{R}$ with operator norm one such that $\overline{b}([a])=\|\alpha\|_C$.
We obtain the desired element $b\in D^n$ by pre-composing $\overline{b}$  with the projection $C_n\to W$.
\end{proof}

Let us come back to  
the mapping cone for the homology of a pair $(X,Y)$.
For $\theta\in (0,\infty)$, the dual normed chain complex of 
$(\ch_n(Y\to X),\|\cdot \|_1(\theta))$
is
Park's mapping cone for relative bounded cohomology~\cite{Park}, that is the complex
$$
(\cb^n (Y\to X),\overline d^n)=\left(\cb^n(X)\oplus \cb^{n-1}(Y),\left(\begin{smallmatrix}d^n&0\\-i^n&-d^{n-1}\end{smallmatrix}\right)\right)
$$
endowed with the norm 
$$
\|(f,g)\|_\infty(\theta)=\max\{\|f\|_\infty ,\theta^{-1}
\|g\|_\infty\}. 
$$
We endow the cohomology $\hb^n(Y\rightarrow X)$
of the complex $(\cb^n(Y\to X),\overline{d}^n)$ with the quotient seminorm, which will be still denoted
by $\|\cdot\|_\infty(\theta)$.
The chain map
\bqn
\beta^n\colon \cb^n(X,Y) \longrightarrow \cb^n(Y\to X),\qquad
% \beta^n (f)= (f,0)
\eqn
induces an isomorphism between $\hb^n(X,Y)$ and $\hb^n(Y\to X)$
(see~\cite{Park}, or the first part of the proof of Proposition~\ref{fund:prop}). 
If we assume that the fundamental group of every component of $Y$ is amenable, then we can improve this result as follows:

\begin{prop}\label{fund:prop}
Suppose that the fundamental group of every component of $Y$ is amenable. Then,
for every $n\geq 2$, $\theta\in (0,\infty)$,
the map 
$$
\h(\beta^n)\colon \left(\hb^n(X,Y),\|\cdot\|_\infty\right)\to \left(\hb^n(Y\to X), \|\cdot \|_\infty(\theta)\right)
$$
is an isometric isomorphism.
\end{prop}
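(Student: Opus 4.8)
The plan is to realize $\h(\beta^n)$ as a composition of maps whose norm behaviour we can control, and to exploit the fact established in Section~\ref{sec:proof:main} that, for $n\geq 2$, relative bounded cohomology coincides isometrically with ordinary bounded cohomology. First I would recall that $\beta^n$ factors through the inclusion $j^n\colon \cb^n(X,Y)\hookrightarrow \cb^n(X)$ together with the obvious inclusion $\iota^n\colon \cb^n(X)\to \cb^n(Y\to X)$, $f\mapsto (f,0)$, which is a chain map (since $Y$ has amenable fundamental group, the relevant copy of $i^n f$ is cohomologically negligible — more precisely the projection $\cb^n(Y\to X)\to\cb^n(Y)$ has amenable target). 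In fact the cleanest route is: $\h(\beta^n)=\h(\iota^n)\circ\h(j^n)$, and by Theorem~\ref{thm:main} the map $\h(j^n)$ is an isometric isomorphism for $n\geq 2$; so it suffices to prove that $\h(\iota^n)\colon(\hb^n(X),\|\cdot\|_\infty)\to(\hb^n(Y\to X),\|\cdot\|_\infty(\theta))$ is an isometric isomorphism.

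For the ``norm non-increasing'' direction, note that $\iota^n$ itself satisfies $\|\iota^n(f)\|_\infty(\theta)=\max\{\|f\|_\infty,0\}=\|f\|_\infty$, so $\h(\iota^n)$ is automatically norm non-increasing. The content is the reverse inequality $\|\h(\iota^n)(\alpha)\|_\infty(\theta)\geq\|\alpha\|_\infty$, together with surjectivity onto cohomology. For surjectivity I would use the long exact sequence of the pair in bounded cohomology (or equivalently the long exact sequence of the mapping cone): since $\hb^{n}(Y)=\hb^{n-1}(Y)=0$ for $n\geq 2$, the map $\hb^n(X)\to\hb^n(Y\to X)$ is an isomorphism — this is already noted in the excerpt just before the statement. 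To get the norm estimate, I would build an explicit norm non-increasing chain homotopy inverse to $\iota^\bullet$ by adapting the retraction $r^\bullet$ constructed in the proof of Theorem~\ref{thm:main}: the amenable $\Gamma$-set $S=\Gamma\sqcup\bigsqcup_i\Gamma/\Gamma_i$ and the $\Gamma$-equivariant retraction $r\colon\widetilde X\to S$ give rise to a norm non-increasing $\Gamma$-morphism $r^n\colon\ell^\infty_{\mathrm{alt}}(S^{n+1})\to\cb^n(\widetilde X)$ landing in relative cochains. Dualizing the mapping-cone picture (Lemma~\ref{lemma:duality}) and using that the extra $\cb^{n-1}(Y)$-summand carries the factor $\theta^{-1}$ in the norm $\|(f,g)\|_\infty(\theta)=\max\{\|f\|_\infty,\theta^{-1}\|g\|_\infty\}$, one checks that a representative cocycle $(f,g)$ of a class in $\hb^n(Y\to X)$ can be modified within its coboundary class so that $g=0$ and $\|f\|_\infty$ is not increased, precisely because $\hb^{n-1}(Y)=0$ with the isometric identification coming from amenability of $\pi_1(Y)$ (Ivanov's theorem / Proposition~\ref{prop:monod}). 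This gives a norm non-increasing left inverse on cohomology, which forces $\h(\iota^n)$ — hence $\h(\beta^n)$ — to be isometric.

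I expect the main obstacle to be the quantitative step: showing that the $\cb^{n-1}(Y)$-component can be killed \emph{without paying in norm}, uniformly in $\theta\in(0,\infty)$. Naively the correction term that removes $g$ is $\overline d^{n-1}$ of something in $\cb^{n-1}(Y)$, and a priori controlling this requires a bounded primitive for a coboundary in $\cb^\bullet(Y)$, whose norm is governed by the (vanishing, but not a priori \emph{isometrically} vanishing) bounded cohomology of $Y$. The resolution is exactly the isometric vanishing supplied by Ivanov's resolution of $\hb^\bullet(\pi_1(Y))$ by relatively injective modules — one gets a genuinely norm non-increasing contracting homotopy on the amenable building block, not merely a bounded one — and this has to be assembled $\Gamma$-equivariantly, component by component over the $C_i$'s, exactly as the map $r^\bullet$ was assembled in the proof of Theorem~\ref{thm:main}. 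Once that homotopy is in hand, the rest is the bookkeeping of the mapping-cone norm and an application of Lemma~\ref{lemma:duality} to pass the inequalities from cochains to cohomology.
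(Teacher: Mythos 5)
Your factorization is built on a map that does not exist at the chain level: $\iota^n\colon \cb^n(X)\to \cb^n(Y\to X)$, $f\mapsto (f,0)$, is \emph{not} a chain map, since $\overline{d}^n(f,0)=(d^nf,-i^nf)$ while $\iota^{n+1}(d^nf)=(d^nf,0)$; these agree only when $f$ vanishes on $Y$, i.e.\ precisely on the subcomplex $\cb^\bullet(X,Y)$, where $\iota$ is nothing but $\beta$ itself. The justification you give ("$i^nf$ is cohomologically negligible because $\pi_1(Y)$ is amenable") cannot repair this: being a chain map is a cochain-level identity, not a statement up to coboundaries, so $\h(\iota^n)$ is undefined and the claimed identity $\h(\beta^n)=\h(\iota^n)\circ\h(j^n)$ does not make sense as stated. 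Moreover, the step you correctly identify as the crux --- modifying a cone cocycle $(f,g)$ within its class so that $g=0$ \emph{without increasing} $\|f\|_\infty$ --- is not delivered by what you invoke: the cocycle condition gives $i^nf=-d^{n-1}g$, and killing $g$ costs a correction $d\alpha$ with $i^{n-1}\alpha+d\beta=g$; vanishing of $\hb^{n-1}(Y)$ (even "isometric" vanishing, which is a statement about seminorms on cohomology) does not provide a primitive with controlled cochain norm, and Ivanov's contracting homotopy is not $\pi_1$-equivariant, so it cannot be applied componentwise on $Y$ and assembled as you suggest. So the hard inequality remains unproved in your plan.

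The inequality you are after has a much softer proof, which is the one the paper uses: the \emph{projection} $\gamma^\bullet\colon \cb^\bullet(Y\to X)\to\cb^\bullet(X)$, $(f,g)\mapsto f$, \emph{is} a chain map, is norm non-increasing for the norm $\|\cdot\|_\infty(\theta)$ for every $\theta\in(0,\infty)$, and satisfies $\gamma^n\circ\beta^n=j^n$. Since $\h(j^n)$ is an isometric isomorphism for $n\geq 2$ by Theorem~\ref{thm:main}, and both $\h(\gamma^n)$ and $\h(\beta^n)$ are norm non-increasing, one gets $\|\alpha\|_\infty=\|\h(j^n)\alpha\|_\infty\leq\|\h(\beta^n)\alpha\|_\infty(\theta)\leq\|\alpha\|_\infty$, i.e.\ the isometry, with no need to normalize representatives inside the cone. (For the isomorphism part your appeal to the long exact sequence is fine; the paper instead checks it directly by extending $g$ by zero to $g'\in\cb^{n-1}(X)$ and observing $(f,g)\sim(f+dg',0)$, which requires no amenability.) In short: the statement you reduce to is the right one morally, but the reduction is ill-defined and the quantitative step you flag is exactly where your argument breaks; replacing $\iota$ by the genuine chain map $\gamma$ going the other way collapses the difficulty.
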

\begin{proof} 
Let us first prove that $\h(\beta^n)$ is an isomorphism (here we do not use any hypothesis
on $Y$).
To this aim, it is enough to show that the composition
\begin{equation}\label{comp:eq}
\xymatrix{
\Zz\ch^n_b(X,Y)\ar[r]^{\beta^n} & \Zz\ch^n_b(Y\to X) \ar[r]&  \hb^n(Y\to X)
}
\end{equation}
is surjective with kernel $d\cb^{n-1}(X,Y)$. For any $g\in \cb^\bullet(Y)$ 
we denote by $g'\in \cb^\bullet(X)$ the extension of $g$ 
that vanishes on simplices with image not contained in $Y$.
Let us take $(f,g)\in\Zz \ch^n_b(Y\to X)$. From $\overline{d}^n(f,g)=0$
we deduce that $f+dg'\in \Zz\ch^n_b(X,Y)$. Moreover, $(f+dg',0)-(f,g)=-\overline{d}^{n-1}(g',0)$, 
so the map~\eqref{comp:eq} above is surjective.
Finally, if $f\in \Zz \cb^n(X,Y)$ and $\overline{d}^{n-1}(\alpha,\beta)=(f,0)$, 
then $\alpha+d\beta'$ belongs to $\cb^{n-1}(X,Y)$ and $d(\alpha+d\beta')=f$. 
This concludes the proof that $\h(\beta^n)$ is an isomorphism.

Let us now suppose that the fundamental group of each component of $Y$ is amenable.
We
consider the
chain map $$\gamma^\bullet \colon \cb^\bullet (Y\to X)\to \cb^\bullet(X),\qquad(f,g)\mapsto f\ .$$
For every $n\in\mathbb{N}$
the composition $\gamma^n\circ \beta^n$ coincides with the inclusion
$j^n\colon \cb^n(X,Y)\to \cb^n(X)$.
By Theorem~\ref{thm:main}, for every $n\geq 2$ the map $\h(j^n)$ is an isometric isomorphism. 
Moreover, both $\h(\gamma^n)$ and $\h(\beta^n)$ are norm non-increasing, 
so we may conclude that the isomorphism $\h(\beta^n)$ is isometric for every $n\geq 2$.
\end{proof}

Putting together Proposition~\ref{fund:prop} and the main theorem of~\cite{Loeh}
we obtain the following result (which may be easily deduced also from 
Proposition~\ref{fund:prop} and Lemma~\ref{lemma:duality}):

\begin{cor}\label{fund:cor}
Suppose that the fundamental group of every component of $Y$ is amenable. Then,
for every $n\geq 2$, $\theta\in (0,\infty)$,
the map 
$$
\h(\beta_n)\colon \left(\h_n(Y\to X),\|\cdot \|_1(\theta)\right)\to  \left(\h_n(X,Y),\|\cdot\|_1\right)
$$
is an isometric isomorphism.
\end{cor}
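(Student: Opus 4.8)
The plan is to deduce Corollary~\ref{fund:cor} by dualizing Proposition~\ref{fund:prop}, in close analogy with the proof of Lemma~\ref{lem: theta iso}, and by invoking the main result of L\"oh~\cite{Loeh} that identifies $\ell^1$-homology seminorms via the topological dual pairing with bounded cohomology. First I would fix $\theta\in(0,\infty)$ and observe that, by construction, the chain complex $(\ch_\bullet(Y\to X),\|\cdot\|_1(\theta))$ is precisely the predual of Park's mapping cone complex $(\cb^\bullet(Y\to X),\|\cdot\|_\infty(\theta))$, so that Lemma~\ref{lemma:duality} applies: for every $\alpha\in\h_n(Y\to X)$ one has
$$
\|\alpha\|_1(\theta)=\max\{\langle\beta,\alpha\rangle\ |\ \beta\in\hb^n(Y\to X),\ \|\beta\|_\infty(\theta)\leq 1\}\ .
$$
The analogous statement for the relative complex gives $\|\alpha'\|_1=\max\{\langle\beta',\alpha'\rangle\ |\ \beta'\in\hb^n(X,Y),\ \|\beta'\|_\infty\leq 1\}$ for $\alpha'\in\h_n(X,Y)$.

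Next I would exploit the fact that the Kronecker pairing is natural: the chain map $\beta_n$ of \eqref{betaPark:eq} and the cochain map $\beta^n$ are adjoint to each other, so that $\langle\h(\beta^n)(\beta'),\alpha\rangle=\langle\beta',\h(\beta_n)(\alpha)\rangle$ for all $\alpha\in\h_n(Y\to X)$ and $\beta'\in\hb^n(X,Y)$. By Proposition~\ref{fund:prop}, for $n\geq 2$ the map $\h(\beta^n)\colon(\hb^n(X,Y),\|\cdot\|_\infty)\to(\hb^n(Y\to X),\|\cdot\|_\infty(\theta))$ is an isometric isomorphism; hence it sets up a norm-preserving bijection between the unit balls of $\hb^n(X,Y)$ and of $\hb^n(Y\to X)$. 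Combining this with the two duality formulas and naturality of the pairing, for every $\alpha\in\h_n(Y\to X)$ we get
$$
\|\alpha\|_1(\theta)=\max_{\|\beta\|_\infty(\theta)\leq 1}\langle\beta,\alpha\rangle
=\max_{\|\beta'\|_\infty\leq 1}\langle\h(\beta^n)(\beta'),\alpha\rangle
=\max_{\|\beta'\|_\infty\leq 1}\langle\beta',\h(\beta_n)(\alpha)\rangle
=\|\h(\beta_n)(\alpha)\|_1\ ,
$$
which is exactly the assertion that $\h(\beta_n)$ is isometric. That $\h(\beta_n)$ is an isomorphism in the first place is already recorded (it is the homological counterpart of the isomorphism statement in Proposition~\ref{fund:prop}, or can be seen directly via the explicit inverse $[u]\mapsto[(u,-\partial_n u)]$ used in Lemma~\ref{lem: theta iso}).

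The one genuine subtlety — and the place where L\"oh's theorem enters — is that Lemma~\ref{lemma:duality} computes $\|\alpha\|$ as a maximum over the \emph{topological-cohomology} unit ball $\hb^n(D^\bullet)$, which is what we want, so in fact the argument above is self-contained once we know $\hb^n(Y\to X)$ is the topological-dual cohomology of $\ch_\bullet(Y\to X)$; this is immediate from the definitions of the two mapping cone complexes. Thus the alternative route advertised in the statement (\textquotedblleft which may be easily deduced also from Proposition~\ref{fund:prop} and Lemma~\ref{lemma:duality}\textquotedblright) is the cleaner one, and L\"oh's result is invoked only to make the connection with the classical formulation of $\ell^1$-homology seminorms transparent. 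I expect the main obstacle to be purely bookkeeping: keeping straight which $\theta$-weighted norm sits on which factor of the cone, and checking that $\beta_n$ and $\beta^n$ are honestly adjoint with respect to the Kronecker pairings on the two complexes (a sign/placement check on the off-diagonal terms $i_{n-1}$ and $-i^n$ of the cone differentials). Once that compatibility is verified, the chain of equalities above closes the proof with no further analytic input.
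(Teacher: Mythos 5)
Your argument is correct and is essentially the route the paper itself indicates: Corollary~\ref{fund:cor} is recorded there as a consequence of Proposition~\ref{fund:prop} together with either L\"oh's theorem or, exactly as you do, Lemma~\ref{lemma:duality}, using that $(\cb^\bullet(Y\to X),\|\cdot\|_\infty(\theta))$ is the dual normed chain complex of $(\ch_\bullet(Y\to X),\|\cdot\|_1(\theta))$ and that $\beta^n$ is adjoint to $\beta_n$ under the Kronecker pairing. Your hands-on Hahn--Banach duality version has the minor virtue of staying entirely within singular homology (no passage through $\ell^1$-homology or a translation principle), and the sign issue you flag in the off-diagonal entries of the cone differentials is harmless, since $(f,g)\mapsto (f,-g)$ identifies the two conventions isometrically without affecting the pairing with $\beta^n(f)=(f,0)$.
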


We are now ready to conclude the proof of Gromov's Equivalence Theorem (Theorem~\ref{thm:equi} here). 
Under the assumption that the fundamental group
of every component of $Y$ is amenable, 
Lemma~\ref{lem: theta iso} and Corollary~\ref{fund:cor} imply that the identity
between $\left(\h_n(X,Y),\|\cdot\|_1\right)$ and $\left(\h_n(X,Y),\|\cdot\|_1(\theta)\right)$ is an isometry for every $\theta>0$.
The conclusion follows from the fact that, by definition,
$\|\cdot \|_1(0)=\|\cdot \|_1$ and $\|\cdot\|_1(\infty)=\lim_{\theta\to \infty} \|\cdot\|_1(\theta)$.

\section{Additivity of the simplicial volume}\label{sec:additivity}
Let us  recall that if $M$ is a compact connected orientable
$n$-manifold, the simplicial volume of $M$ is defined as
$$
\| M,\bb M\|=\| [M,\bb M]\|_1\ ,
$$
where $[M,\bb M]\in \ch_n(M,\bb M)$ is the image of the integral fundamental class
of $M$ via the change of coefficients homomorphism induced by the inclusion
$\mathbb{Z}\hookrightarrow \mathbb{R}$.

Let $G$ be a finite graph and let us associate to any vertex $v\in V(G)$ a compact oriented $n$-manifold $(M_v,\partial M_v)$ 
and to any edge $e\in E(G)$ a closed oriented $(n-1)$-manifold $S_e$ together 
with an orientation preserving homeomorphism $f_e\colon S_e\to \partial_e M_{t(e)}$, where $\partial_e M_{t(e)}$ is a connected component
of $\partial M_{t(e)}$. We also require that
$S_{\overline e}$ is equal to $S_e$ with reversed orientation, and that 
the images of $f_e$ and $f_{e'}$ are distinct whenever $e,e'$ are distinct
edges of $G$. 
We denote by $M$ the quotient of the union $\left(\bigcup_{v\in V(G)} M_v\right)\cup
\left(\bigcup_{e\in \bar E(G)} S_e\right)$ with respect to the identifications induced
by the maps $f_e$, $e\in E(G)$. Of course, $M$ is just the manifold obtained by gluing the $M_v$ along
the maps $f_e\circ f_{\overline{e}}^{-1}$, $e\in \bar{E}(G)$. We also assume that $M$ is connected.

For every $e\in E(G)$ we identify $S_e$ with the corresponding hypersurface in $M$,
and we
denote by $\Ss$ the union $\bigcup_{e\in \bar E(G)} {S}_e\subseteq M$. 
The inclusion $i_v\colon (M_v,\bb M_v)\to (M,\mathcal{S}\cup \bb M)$  is a map of pairs inducing a norm non-increasing map in cohomology 
$$
i_v^n\colon \hb^n(M,\mathcal{S}\cup \bb M
)\to \hb^n(M_v,\bb M_v) \ .$$
Moreover, since any component of $\bb M \cup \Ss$ has amenable fundamental group, 
and every compact manifold has the homotopy type of a finite CW-complex~\cite{RS}, 
we may compose the isomorphisms $\hb^n(M,\bb M)\cong\hb^n(M)$, $\hb^n(M)\cong\hb^n(M,\bb M\cup \Ss)$ provided by
Theorem \ref{thm:main}, thus getting an isometric isomorphism 
$$
\zeta^n\colon \hb^n(M,\bb M)\to \hb^n(M,\bb M\cup \Ss)\ .
%\xymatrix{\hb^n(M,\bb M)\ar[r]&\hb^n(M)\ar[r]&\hb^n(M,\bb M\cup \Ss)\ .}
$$
This map is the inverse of the map induced by the inclusion of pairs $(M,\bb M)\to(M,\bb M\cup \Ss) $.
Finally, we define the norm non-increasing map 
$$\zeta^n_v=i^n_v\circ\zeta^n\colon \hb^n(M,\bb M)\to \hb^n(M_v,\bb M_v)\ .$$

\begin{lemma}\label{somma}
For every $\varphi\in \hb^n(M,\bb M)$ we have
$$
\langle \varphi,[M,\bb M]\rangle=\sum_{v\in V(G)} \langle \zeta^n_v(\varphi), [M_v,\bb M_v]\rangle\ .
$$
\end{lemma}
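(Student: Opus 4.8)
The plan is to rewrite both sides of the claimed equality as Kronecker pairings against the single class $\zeta^n(\varphi)\in\hb^n(M,\bb M\cup\Ss)$, and then to reduce the statement to a decomposition of the relative fundamental class inside $\h_n(M,\bb M\cup\Ss)$. Denote by $\rho^n\colon\hb^n(M,\bb M\cup\Ss)\to\hb^n(M,\bb M)$ and $\rho_n\colon\h_n(M,\bb M)\to\h_n(M,\bb M\cup\Ss)$ the maps induced by the inclusion of pairs $(M,\bb M)\hookrightarrow(M,\bb M\cup\Ss)$; by the discussion preceding the lemma, $\zeta^n$ is the inverse of $\rho^n$, so $\rho^n(\zeta^n\varphi)=\varphi$. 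Naturality of the Kronecker product with respect to maps of pairs — which holds already at the cochain level, since $\langle f^\bullet\beta,\alpha\rangle=\langle\beta,f_\bullet\alpha\rangle$ for a chain map $f_\bullet$ and its dual $f^\bullet$ — gives
$$\langle\varphi,[M,\bb M]\rangle=\langle\rho^n(\zeta^n\varphi),[M,\bb M]\rangle=\langle\zeta^n\varphi,\rho_n[M,\bb M]\rangle,$$
and, applied to $i_v\colon(M_v,\bb M_v)\to(M,\bb M\cup\Ss)$ together with $\zeta^n_v=i^n_v\circ\zeta^n$, gives
$$\langle\zeta^n_v(\varphi),[M_v,\bb M_v]\rangle=\langle\zeta^n\varphi,(i_v)_*[M_v,\bb M_v]\rangle$$
for every $v\in V(G)$, where $(i_v)_*\colon\h_n(M_v,\bb M_v)\to\h_n(M,\bb M\cup\Ss)$. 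Summing over $v$ and using bilinearity of $\langle\cdot,\cdot\rangle$, the lemma becomes equivalent to the homological identity $\rho_n[M,\bb M]=\sum_{v\in V(G)}(i_v)_*[M_v,\bb M_v]$ in $\h_n(M,\bb M\cup\Ss)$.

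To establish this identity I would produce a relative fundamental cycle of $(M,\bb M)$ adapted to the decomposition $M=\bigcup_vM_v$. Fix a triangulation of $M$ having $\Ss$ and $\bb M$ as subcomplexes (for instance after smoothing the gluing homeomorphisms, or directly from compatible cell structures on the pieces), and let $z\in\ch_n(M)$ be the coherently oriented sum of its top-dimensional simplices, so that $z$ represents $[M,\bb M]$. Since the interiors of the $M_v$ are pairwise disjoint, each top simplex lies in exactly one $M_v$, and grouping the simplices accordingly writes $z=\sum_vz_v$ with $z_v\in\ch_n(M_v)$. As $z_v$ is the coherently oriented sum of the top simplices of the induced triangulation of $M_v$ and $\bb M_v=M_v\cap(\bb M\cup\Ss)$ is a subcomplex, $z_v$ is a relative fundamental cycle of $(M_v,\bb M_v)$: its boundary is supported in $\bb M_v$ and $[z_v]=[M_v,\bb M_v]$. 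Regarding all of these chains inside $\h_n(M,\bb M\cup\Ss)$, the class of $z$ is $\rho_n[M,\bb M]$, the class of $z_v$ is $(i_v)_*[M_v,\bb M_v]$, and the chain-level equality $z=\sum_vz_v$ descends to the desired identity; combined with the first paragraph this proves the lemma.

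The only genuinely geometric ingredient is the adapted fundamental cycle $z=\sum_vz_v$, so I expect that to be the one point needing care — in particular, justifying that one may work with a triangulation of the a priori merely topological manifold $M$ compatible with the gluing. If one prefers to avoid triangulations, the same cycle can be built directly: fix for each geometric edge $e$ a fundamental cycle $w_e$ of $S_e$ with $w_{\bar e}=-w_e$, extend it to a relative fundamental cycle $z_v$ of each $(M_v,\bb M_v)$ whose boundary restricts to the relevant $w_e$ along the gluing hypersurfaces, and observe that the $z_v$ patch together, along those hypersurfaces, to a cycle representing $[M,\bb M]$; the only subtlety is then the bookkeeping of orientations.
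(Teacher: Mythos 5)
Your proposal is correct in substance and, once the singular--chain fallback is taken as the actual argument, it is essentially the paper's proof. The paper chooses relative fundamental cycles $c_v$ of the pairs $(M_v,\bb M_v)$, corrects $c=\sum_v c_v$ by a chain $c'$ supported in $\Ss$ so that $c''=c-c'$ is a relative fundamental cycle of $(M,\bb M)$, and then evaluates a representative $\psi\in\cb^n(M,\bb M\cup\Ss)$ of $\zeta^n(\varphi)$ on $c$ and on $c''$, using that $\psi$ vanishes on chains in $\Ss$. This is exactly your reduction, via naturality of the Kronecker pairing, to the identity $\rho_n[M,\bb M]=\sum_{v}(i_v)_*[M_v,\bb M_v]$ in $\h_n(M,\bb M\cup\Ss)$, just carried out directly at the cochain level instead of being stated as a separate homological lemma; the two formulations buy the same thing.

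The one point that needs correcting is your primary construction: you cannot, in the generality of the statement, fix a triangulation of $M$ having $\Ss$ and $\bb M$ as subcomplexes. The $M_v$ are arbitrary compact topological manifolds of dimension $n\geq 2$ glued along homeomorphisms; in dimension $4$ and higher such manifolds need not be triangulable, and ``smoothing the gluing homeomorphisms'' is likewise unavailable, so the triangulated cycle $z=\sum_v z_v$ cannot serve as the proof and the fallback must. In the fallback, the step ``extend $w_e$ to a relative fundamental cycle $z_v$ whose boundary restricts to $w_e$'' is where the real (small) argument sits, and it is precisely the paper's correction trick: for any relative fundamental cycle $c_v$, its boundary piece on the glued component $\partial_e M_{t(e)}$ is a fundamental cycle of $S_e$, hence differs from $w_e$ by $\bb b_e$ for some $b_e\in\ch_n(S_e)$, and replacing $c_v$ by $c_v-\sum_e b_e$ (a change supported in $\bb M_v$, so the relative class is unchanged) achieves the prescribed boundary. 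With $w_{\bar e}=-w_e$ and the gluings orientation-reversing, the boundary terms along each geometric edge cancel, $\sum_v z_v$ is a relative cycle of $(M,\bb M)$, and a local-degree check at an interior point of any $M_v$ shows it represents $[M,\bb M]$. With these adjustments your argument is complete and coincides with the paper's.
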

\begin{proof}
Let $c_v\in \ch_n(M_v)$ be a real chain representing the fundamental class of $M_v$.
With an abuse, we identify any chain in $M_v$ with the corresponding chain in
$M$, and we 
set
$c=\sum_{v\in V(G)} c_v \in \ch_n(M)$.
We now suitably modify $c$ in order to obtain a relative fundamental cycle for
$M$. It is readily seen that $\bb c_v$ is the sum of real fundamental cycles of the boundary components
of $M_v$. Therefore, since the gluing maps defining $M$ are
orientation-reversing, 
we may choose a chain $c'\in \oplus_{e\in \bar{E}(G)} \ch_{n}({S}_e)$ such that $\bb c-\bb c'\in \ch_{n-1}(\bb M)$.
We  set $c''=c-c'$. By construction $c''$ is a relative cycle
in $\ch_n (M,\bb M)$, and it is immediate to check 
that it is in fact a relative fundamental cycle for $M$.
% Moreover, if $x$ is any point in $M_j\setminus \bb M_j\subseteq M$, then
%$c_j$ defines the real fundamental class of $\h_n(M_i,M_i\setminus \{x\})$, so
%$c''$ defines the real fundamental class of $\h_n(M,M\setminus \{x\})$, whence
%of $\h_n (M,\bb M)$.  
Let now $\psi\in \cb^n(M,\mathcal{S}\cup \bb M)$ be a representative of 
$\zeta^n(\varphi)$.
By definition we have
$$
\psi(c)=\sum \psi(c_v)=\sum \langle \zeta^n_v(\varphi), [M_v,\bb M_v]\rangle \ .
$$
On the other hand, since $\psi$ vanishes on chains supported on $\mathcal{S}$, we also have
$$
\psi(c)=\psi(c''+c')=\psi(c'')=\langle \varphi,[M,\bb M]\rangle\ ,
$$
and this concludes the proof.
\end{proof}

Let us now proceed with the proof of Theorem~\ref{simpl:thm}.
In order to match the notation with the statement of Theorem~\ref{simpl:thm}, 
we henceforth denote by $\{1,\ldots,k\}$ the set of vertices of $G$. 
By Lemma~\ref{lemma:duality}
we may choose an element $\varphi\in\hb^n(M,\bb M)$
such that 
$$
\|M,\partial M\|=\langle  \varphi,[M,\bb M]\rangle \, ,\qquad 
\|\varphi\|_\infty\leq 1\ .
$$
Observe that $\|\zeta^n_v(\varphi)\|_\infty\leq \|\varphi\|_\infty\leq 1$
for every $v\in V(G)$, so by Lemma~\ref{somma}
$$
\|M,\partial M\| 
=
\langle \varphi,[M,\bb M]\rangle  =\sum_{v=1}^k \langle \zeta^n_v(\varphi),[M_v,\bb M_v]\rangle  \leq 
{\sum_{v=1}^k \|M_v,\bb M_v\|}\ .
$$
This proves the first part of Theorem~\ref{simpl:thm}.

\begin{rem}\label{final:rem}
The inequality
$$
\| M,\bb M\|\leq \| M_1,\bb M_1\|+\ldots+\|M_k,\bb M_k\|
$$
may also be proved by showing that ``small'' fundamental cycles for $M_1,\ldots,M_k$ may be glued together
to construct a ``small'' fundamental cycle for $M$.
%\end{equation}
The fundamental group of every ${S}_e\subseteq M$ is amenable, so 
%Let $S$ be a boundary component of some $M_j$. Since $\pi_1(S)$ is amenable, 
the real singular chain module
$\oplus_{e\in \bar{E}(G)} \ch_{n-1}({S}_e)$
satisfies Matsumoto-Morita's \emph{uniform boundary condition} \cite{Matsumoto_Morita}.
This means that there exists 
$U>0$ such that every boundary $\bb c\in \ch_{n-1}({S}_e)$ satisfies the equation $\bb c=\bb c'$ for some
$c'\in \ch_n({S}_e)$ such that $\|c'\|_1\leq U\cdot \|\bb c\|_1$. 

Let now $\vare>0$ be given.
By Corollary~\ref{Thurston's version},
for every $v$ we may choose a real relative fundamental cycle $c_v\in \ch_n(M_v,\bb M_v)$
such that 
$$\|c_v\|_1\leq \|M_v,\bb M_v\|+\vare\ ,\qquad \|\bb c_v\|_1\leq \vare\ .$$ 
Let us  set $c=c_1+\ldots+c_k\in \ch_n(M)$ (as above, we identify any chain in $M_v$  with
its image in $M$). 
As we did in
 Lemma~\ref{somma}, we may obtain a relative fundamental cycle $c''$
for $M$ by setting $c''=c-c'$, where $c'$ is a suitable chain in
$\oplus_{e\in \bar{E}(G)} \ch_{n}({S}_e)$. Moreover, by the uniform boundary condition we may choose $c'$ in such a way that 
$\|c'\|_1\leq Uk\vare$. 
Therefore, we eventually have
$$
\| M,\bb M\|  \leq \|c''\|_1\leq \|c\|_1+\|c'\|_1\leq \sum\|M_v,\bb M_v\| + k\vare+Uk\vare
$$
and this concludes the proof.
\end{rem}

To conclude the proof of Theorem~\ref{simpl:thm} we now consider
the case when $M$ is obtained via compatible gluings.
Therefore, if $K_e$ is the kernel of the map induced by $f_e$ on fundamental groups,
then  $K_e=K_{\overline{e}}$ for every $e\in E(G)$
(recall that $S_e=S_{\overline{e}}$, so both $K_e$ and $K_{\overline{e}}$
are subgroups of $\pi_1(S_e)=\pi_1(S_{\overline{e}})$).
If we consider the graph of groups $\Gg$ with vertex groups $G_v=\pi_1(M_v)$ and 
edge groups $G_e=\pi_1(S_e)\slash K_e$, then van Kampen Theorem implies that 
$\pi_1(M)$ is the fundamental group of the graph of groups $\Gg$ (see \cite{Wall} for full details). 

\begin{prop}\label{comp:prop}
For every
$(\varphi_1,\ldots,\varphi_k)\in \oplus_{v=1}^k \hb^n (M_v,\bb M_v)$, there exists 
$\varphi\in\hb^n(M, \bb M)$ such that
$$
\|\varphi\|_\infty\leq \|(\varphi_1,\ldots,\varphi_k)\|_\infty\,,
\quad \zeta^n_v(\varphi)=\varphi_v\, ,\ v=1,\ldots,k\ .
$$
\end{prop}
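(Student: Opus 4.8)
The plan is to read Proposition~\ref{comp:prop} as the manifold-theoretic counterpart of Theorem~\ref{thm:bounded:graph} for the graph of groups $\Gg$, transported through Gromov's isometric isomorphism between the bounded cohomology of a space and of its fundamental group and through Theorem~\ref{thm:main}. Throughout, $n\geq 2$ and $\Gamma=\pi_1(M)$, which by van Kampen's theorem for graphs of groups \cite{Wall} is the fundamental group of $\Gg$, with countable vertex groups $G_v=\pi_1(M_v)$ and amenable edge groups $G_e=\pi_1(S_e)/K_e$.

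First I would record the isometric identifications at play. Since every boundary component of $M_v$ has amenable fundamental group and every compact manifold has the homotopy type of a finite CW-complex~\cite{RS}, Theorem~\ref{thm:main} gives an isometric isomorphism $\hb^n(M_v,\partial M_v)\cong\hb^n(M_v)$, and Gromov's theorem~\cite[p.~49]{Gromov_82} an isometric isomorphism $\hb^n(M_v)\cong\hb^n(\pi_1(M_v))=\hb^n(G_v)$; let $\psi_v\in\hb^n(G_v)$ be the class corresponding to $\varphi_v$, so $\|\psi_v\|_\infty=\|\varphi_v\|_\infty$. In the same way $\hb^n(M,\partial M)\cong\hb^n(M)\cong\hb^n(\Gamma)$ and $\hb^n(M,\partial M\cup\Ss)\cong\hb^n(M)\cong\hb^n(\Gamma)$ isometrically; since both relative groups are identified with $\hb^n(M)$ via the ``forget-relative'' maps and $\zeta^n$ is by construction the resulting change of coefficients, $\zeta^n$ corresponds to the identity of $\hb^n(\Gamma)$ under these identifications.

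Next, as the $G_v$ are countable and the $G_e$ are amenable, Theorem~\ref{thm:bounded:graph} applies and produces an isometric embedding $\Theta\colon\bigoplus_v\hb^n(G_v)\to\hb^n(\Gamma)$ which is a right inverse of $\bigoplus_v\h(i_v)$, where $i_v\colon G_v\hookrightarrow\Gamma$ is the vertex inclusion. I would then let $\varphi\in\hb^n(M,\partial M)$ be the class corresponding to $\Theta(\psi_1,\dots,\psi_k)$ under the identifications above. Since $\Theta$ is norm-preserving, $\|\varphi\|_\infty=\|\Theta(\psi_1,\dots,\psi_k)\|_\infty=\max_v\|\psi_v\|_\infty=\|(\varphi_1,\dots,\varphi_k)\|_\infty$, which yields the required bound (in fact with equality, the reverse inequality being automatic since each $\zeta^n_v$ is norm non-increasing).

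It remains to verify $\zeta^n_v(\varphi)=\varphi_v$, and this is where the only real care is needed. The claim is that, under the identifications $\hb^n(M,\partial M\cup\Ss)\cong\hb^n(\Gamma)$ and $\hb^n(M_v,\partial M_v)\cong\hb^n(G_v)$ above, the map $i^n_v$ induced by the inclusion of pairs $(M_v,\partial M_v)\hookrightarrow(M,\partial M\cup\Ss)$ becomes $\h(i_v)\colon\hb^n(\Gamma)\to\hb^n(G_v)$. This is a naturality statement: the isomorphisms of Theorem~\ref{thm:main} are induced by inclusions of relative cochains into absolute cochains and so are natural for maps of pairs, Gromov's isomorphism is natural, and the inclusion $M_v\hookrightarrow M$ induces on fundamental groups precisely the Bass--Serre inclusion $i_v\colon G_v\hookrightarrow\Gamma$ (again \cite{Wall}). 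Granting this, $\zeta^n_v(\varphi)=i^n_v(\zeta^n(\varphi))$ corresponds to $\h(i_v)(\Theta(\psi_1,\dots,\psi_k))=\psi_v$, and $\psi_v$ corresponds to $\varphi_v$; hence $\zeta^n_v(\varphi)=\varphi_v$. I expect assembling this last commuting square — checking that the three isometric isomorphisms in play fit together and that the geometric inclusion of a piece realizes the Bass--Serre inclusion of the corresponding vertex group — to be the main and essentially only non-formal obstacle; everything else is bookkeeping with isometries and norm non-increasing maps.
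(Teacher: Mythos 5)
Your proposal is correct and follows essentially the same route as the paper: the authors also transport $(\varphi_1,\dots,\varphi_k)$ through the isometric isomorphisms of Theorem~\ref{thm:main} and Gromov--Ivanov to $\bigoplus_v\hb^n(\pi_1(M_v))$, apply the section coming from Theorem~\ref{thm:baricenter} (i.e.\ the map $\Theta$ of Theorem~\ref{thm:bounded:graph}), and conclude via the commutativity of the very diagram you describe, identifying $\zeta^n_v$ with the restriction $\h(i_v)$. Your extra remarks on naturality and on the norm equality are consistent with, and slightly more detailed than, the paper's one-line argument.
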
 
\begin{proof}
 The proposition follows at once from Theorem~\ref{thm:baricenter} and
 the commutativity of 
the following diagram:
$$
\xymatrix{
\hb^n(M,\bb M) \ar[r] \ar[d]_{\oplus \zeta^n_v} & \hb^n(M)\ar[r]\ar[d]& \hb^n(\pi_1(M))\ar[d]^{\oplus i_v^n}\\
\oplus \hb^n(M_v,\bb M_v)\ar[r] & \oplus\hb^n(M_v)\ar[r]&\oplus \hb^n(\pi_1(M_v))\ ,
}
$$
where the horizontal arrows are, respectively, the isometric isomorphisms constructed in Theorem \ref{thm:main} and Ivanov's maps,
and the vertical arrows are given by restrictions.
\end{proof}

By Lemma~\ref{lemma:duality},
for every $v=1,\ldots,k$, 
we may choose an element $\varphi_v\in\hb^n(M_v,\bb M_v)$
such that 
$$
\|M_v,\partial M_v\|= \langle \varphi_v,[M_v,\bb M_v]\rangle \, ,\qquad 
\|\varphi_v\|_\infty\leq 1\ ,
$$
and Proposition~\ref{comp:prop} implies that
there exists $\varphi\in \hb^n(M,\bb M)$ such that
$$
\|\varphi\|_\infty\leq 1\, ,\quad \zeta^n_v(\varphi)=\varphi_v\, ,\ v=1,\ldots,k\ .
$$
Using Lemma~\ref{somma} we get
$$
 \sum_{v\in V(G)} \|M_v,\bb M_v\| =
 \sum_{v\in V(G)} \langle \varphi_v,[M_v,\bb M_v]\rangle=\langle \varphi, [M,\bb M]\rangle
\leq \|M,\bb M\|\ ,
$$
which finishes the proof of Theorem~\ref{simpl:thm}.

\begin{rem}\label{counter}
The following examples show that 
the hypotheses of Theorem~\ref{simpl:thm} should not be too far from being the weakest
possible.

Let $M$ be a hyperbolic $3$-manifold with connected geodesic boundary. It is well-known
that $\partial M$ is 
$\pi_1$-injective in $M$. We fix a pseudo-Anosov homeomorphism $f\colon \partial M\to\partial M$, and for every $m\in \mathbb{N}$
we denote by $D_m M$ the twisted double obtained
by gluing two copies of $M$ along the homeomorphism $f^m\colon \partial M\to \partial M$
(so $D_0 M$ is the usual double of $M$).
It is shown in~\cite{Jungreis} that 
$$\| D_0 M\|<2\cdot \|M,\partial M \|\ .$$ 
On the other hand, 
by~\cite{Soma2} we have  $\lim_{m\to \infty} {\rm Vol}\, D_{m} M=\infty$.
But ${\rm Vol}\, N=v_3 \cdot \|N\|$ for every closed hyperbolic $3$-manifold $N$,
where $v_3$ is a universal constant~\cite{Gromov_82, Thurston_notes}, so 
$\lim_{m\to \infty} \| D_{m} M \|=\infty$, and the inequality $$\|D_{m} M\|>2\cdot \|M,\partial M\|$$
holds for infinitely many $m\in\mathbb{N}$.
This shows that, 
even in the case when each $S_e$ is $\pi_1$-injective in $M_{t(e)}$, 
no inequality between $\|M,\bb M\|$ and $\sum_{v=1}^k \|M_v,\bb M_v\|$ holds
in general
if one drops the requirement that 
the fundamental group of every $S_e$ is amenable.

On the other hand, if $M_1$ is (the natural compactification of) the once-punctured torus and $M_2$ is the $2$-dimensional disk,
then the manifold $M$ obtained by gluing $M_1$ with $M_2$ along $\bb M_1\cong \bb M_2\cong S^1$ is a torus, 
so
$$
\|M\|=0<2+0=\| M_1,\bb M_1\|+ \|M_2,\bb M_2\|\ .
$$
This shows that, even 
in the case when the fundamental group of every $S_e$ is amenable, 
the equality $\|M,\bb M\|=\sum_{j=1}^k \|M_j,\bb M_j\|$
does not hold in general if one drops the requirement that the gluings
are compatible.
\end{rem}

\vskip1cm

\providecommand{\bysame}{\leavevmode\hbox to3em{\hrulefill}\thinspace}
\providecommand{\MR}{\relax\ifhmode\unskip\space\fi MR }
% \MRhref is called by the amsart/book/proc definition of \MR.
\providecommand{\MRhref}[2]{%
  \href{http://www.ams.org/mathscinet-getitem?mr=#1}{#2}
}
\providecommand{\href}[2]{#2}

\end{document}